\newtheorem{theorem}{Theorem}[section]
\newtheorem{lemma}[theorem]{Lemma}
\newtheorem{corollary}[theorem]{Corollary}
\newtheorem{proposition}[theorem]{Proposition}
\newtheorem{algorithm}[theorem]{Algorithm}
\theoremstyle{definition}
\newtheorem{definition}[theorem]{Definition}
\newtheorem{example}[theorem]{Example}
\theoremstyle{remark}
\newtheorem{remark}[theorem]{Remark}
\numberwithin{equation}{section}
\begin{document}
\title{Quasi-Invariant Measures, Escape Rates\\ And The Effect Of The Hole}
\author{Wael Bahsoun}
    \address{Department of Mathematical Sciences, Loughborough University, 
Loughborough, Leicestershire, LE11 3TU, UK}
\email{W.Bahsoun@lboro.ac.uk}
\author{Christopher Bose}
\address{Department of Mathematics and Statistics, University of Victoria,  
   PO BOX 3045 STN CSC, Victoria, B.C., V8W 3R4, Canada}
\email{cbose@uvic.ca}
\thanks{W.B. thanks the Department of Mathematics and Statistics at the University of Victoria, Canada, for hosting him during May-June 2009.  His visit was supported by by Loughborough University Small Faculty Grant  number H10621.}
\subjclass{Primary 37A05, 37E05}
\date{\today}
\keywords{Transfer Operator, Interval Maps, Escape Rates, Ulam's Method}
\begin{abstract}
Let $T$ be a piecewise expanding interval map and $T_H$ be an abstract perturbation of $T$ into an interval map with a hole. Given a number $\ell$, $0<\ell<1$, we compute an upper-bound on the size of a hole needed for the existence of an absolutely continuous conditionally invariant measure (accim) with escape rate not greater than $-\ln(1-\ell)$. The two main ingredients of our approach are Ulam's method and an abstract perturbation result of Keller and Liverani.
\end{abstract}
\maketitle
\pagestyle{myheadings} 
\markboth{Quasi-Invariant Measures, Escape Rates And The Effect Of The Hole}{W. Bahsoun And C. Bose}
\section{Introduction}
Open dynamical systems have  recently been a very active topic of research in ergodic theory and dynamical systems. Such dynamical systems are used in studying nonequilibrium statistical mechanics \cite{Ru} and metastable chaos \cite{YY}. 

\bigskip

A dynamical system is called open if there is a subset in the phase space such that whenever an orbit lands in it, the dynamics of this obit is terminated; i.e, the orbit dies or disappears. The subset through which orbits escape is called a hole, denoted $H$. The escape rate through $H$ can be measured if the system admits an absolutely  continuous conditionally invariant measure (accim).  The first result  in this direction is due to Pianigiani and Yorke \cite{PY}. The survey article \cite{DY} contains a considerable list of references on the existence of accim and its relation to other measures. One of the most intuitive existence results is found in Section 7 of \cite{LM}. It is mainly concerned with systems having small holes and its idea is based on the perturbation result of \cite{KL}. It roughly says that if a mixing interval map is perturbed by introducing a `sufficiently small' hole, then the resulting open dynamical system admits an accim. Our main goal in this paper is to show how the condition `sufficiently small'  can be computationally verified in some of these results, in particular, results from  Section 7 of \cite{LM}.

\bigskip

More precisely, for a given Lasota-Yorke map $T$, we use Ulam's method on the closed dynamical system $T$ to give a computable size of the hole $H$ for which the open dynamical system $T_H$ 
must admit an accim.

\bigskip
Historically, Ulam approximations have been used to provide rigorous estimates of invariant densities of closed systems (see \cite{Li} and references cited there) or to approximate other dynamical invariants (see \cite{Fr}). The use of Ulam's method in the study of open systems is natural. In \cite{WB}, Ulam approximations were used to rigorously estimate the escape rate for certain open systems. In that computation the Ulam matrix was derived from 
the Perron-Frobenius operator associated to the open system (i.e., a sub-stochastic 
matrix), whereas here, we approximate the closed system.
The method in \cite{WB} also demanded existence of an accim as a basic assumption.
\bigskip

We remark that, as a consequence of the spectral methods discussed here, upper bounds on the 
escape rate can be obtained from analysis of the closed system. However this does not generally
replace the computation in \cite{WB} where the Ulam approximation of the
open system yields both upper and lower bounds on the escape rate (but under 
the additional assumption of existence of an accim).
Hence, there
is potential to apply a two step method -- the current algorithm would be
used to guarantee an accim and 
to provide rough (upper) bounds on the escape rate, followed by 
the method of \cite{WB}, once the size of the hole 
is fixed, to more accurately estimate the latter.
\bigskip

Our paper is organized as follows.
In Section 2 we present a version of  Keller and Liverani's abstract perturbation theorem. The constants which are involved in this theorem are essential in all our computations and thus, we need to state all the details of this theorem explicitly.  Section 3 contains a precise setting of the problem. Section 4  contains technical lemmas, mostly well-known and stated without proof.
 Section 5 presents the algorithm (Algorithm \ref{alg}) whose 
outputs ($\delta_{com}, \varepsilon_{com}$) are the parameters used to solve our problem.  
Theorem \ref{Th2}, takes these parameters and computes a maximum hole size 
leading to an accim.  In Section 6 we provide, in detail, rigorous computations of the size of a hole for two examples as well as a discussion of computational overhead and some techniques for 
reducing computation time.  In Section 7 we discuss how our methods can be implemented in a smooth setting where there are interesting results concerning the effect of the position of a hole on the escape rate. This is in connection with the recent results of \cite{BY} and \cite{KL2}.  The examples of Section \ref{comp} and the discussion of Section \ref{smooth} highlight the new results that Algorithm \ref{alg} brings to open dynamical systems.  
\section{The First Keller-Liverani Perturbation Result}
Let $(I,\mathfrak B,\lambda)$ be the measure space where $I=[0,1]$, $\mathfrak B$ is the Borel 
$\sigma$-algebra and $\lambda$ is Lebesgue measure. 
Let $L^{1}=L^{1}(I,\mathfrak B,\lambda)$. For $f\in L^1$, we define 
$$Vf=\inf_{\overline f}\{\text{var}\overline f\, : f=\overline f \text{ a.e.}\},$$
where
$$\text{var}\overline f=\sup\{\sum_{i=0}^{l-1}|\overline{f}(x_{i+1})-\overline{f}(x_i)|\, :0=x_0<x_1<\dots<x_l=1\}.$$
We denote by $BV$ the space of functions of bounded variation on $I$ equipped with 
the norm $\| \cdot\| _{BV}=V(\cdot)+\| \cdot\| _{1}$ \cite{DS}.  Let $P_i:BV(I)\to BV(I)$  be two bounded linear operators, $i=1,2$. 
We assume that:
\noindent For $f\in L^1$ 
\begin{equation}\label{E0} 
\| P_if\| _1\le \| f\| _1,
\end{equation}
and $\exists\,\alpha\in (0,1)$, $A>0$ and $B\ge 0$ such that
\begin{equation}\label{E1} 
\| P_{i}^nf\| _{BV}\le A\alpha^n\| f\| _{BV}+B\| f\| _1 \hskip .3cm \forall n\in\mathbb N\,\ \forall f\in BV(I),\, i=1,2.
\end{equation}
Further, we introduce the mixed operator norm:
$$|||P_i|||=\underset{\| f\| _{BV}\le 1}{\sup}\| P_if\| _1.$$
For any bounded linear operator $P: BV\to BV$ with spectrum $\sigma(P)$, consider the set
$$V_{\delta,r}(P)=\{z\in\mathbb C: |z|\le r\text{ or dist}(z,\sigma(P))\le\delta\}.$$
Since the complement of $V_{\delta,r}(P)$ belongs to the resolvent of $P$, it follows that 
(\cite{DS} Lemma 11, VII.6.10)
$$H_{\delta,r}(P)=\sup_{z}\{\| (z-P)^{-1}\| _{BV}: z\in\mathbb C\setminus V_{\delta,r}\}<\infty.$$
\begin{remark}
$\alpha$ in (\ref{E1}) is an upper bound on the essential spectral radius of $P_i$ \cite{Ba}. 
\end{remark}
\begin{theorem}\cite{KL,Li}\label{Th1}
Consider two operators $P_i: BV(I)\to BV(I)$ which satisfy (\ref{E0}) and (\ref{E1}). For $r\in(\alpha, 1)$, let 
$$n_1=\lceil \frac{\ln 2A}{\ln r/\alpha}\rceil$$
$$C=r^{-n_1};\hskip 1cm D=A(A+B+2)$$
$$n_2=\lceil\frac{\ln 8BDCH_{\delta,r}(P_1)}{\ln r/\alpha}\rceil .$$
If
$$|||P_1-P_2|||\le\frac{r^{n_1+n_2}}{8B(H_{\delta,r}(P_1)B+(1-r)^{-1})}\overset{\textrm{def}}{=}
\varepsilon_1(P_1,r,\delta)$$
then for each $z\in\mathbb C\setminus V_{\delta,r}(P_1)$, we have
$$\| (z-P_2)^{-1}f\| _{BV}\le \frac{4(A+B)}{1-r}r^{-n_1}\| f\| _{BV}+\frac{1}{2\varepsilon_1}\| f\| _1.$$
Set
$$\gamma=\frac{\ln(r/\alpha)}{\ln(1/\alpha)},$$ 
$$a=\frac{8[2A(A+B)+(1-r)^{-1}](A+B)^{2}r^{-n_1}+1}{1-r}$$
and
$$b=2[(4(A+B)^2(D+B)+B)(1-r)^{-1}r^{-n_1}+B].$$
If
\begin{equation}\label{E2}
\begin{split} 
|||P_1-P_2|||&\le\min\{\varepsilon_1(P_1,r,\delta),\left[\frac{r^{n_1}}{4B\left(H_{\delta,r}(P_1)
(D+B)+2A(A+B)+(1-r)^{-1}\right) }\right]^{\gamma}\}\\
&\overset{\textrm{def}}{=}\varepsilon_0(P_1,r,\delta)
\end{split}
\end{equation}
then for each $z\in\mathbb C\setminus V_{\delta,r}(P_1)$, we have
\begin{equation}\label{E3}
|||(z-P_2)^{-1}-(z-P_{1})^{-1}|||\le |||P_1-P_2|||^{\gamma}(a\| (z-P_1)^{-1}\| _{BV}+b\| (z-P_1)^{-1}\| _{BV}^2). 
\end{equation}
\end{theorem}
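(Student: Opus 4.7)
The overall strategy is the standard Keller-Liverani perturbation argument: apply the second resolvent identity
$$(z-P_2)^{-1}-(z-P_1)^{-1}=(z-P_1)^{-1}(P_2-P_1)(z-P_2)^{-1}$$
and use (\ref{E1}) to transfer the spectral picture of $P_1$ to $P_2$. The technical subtlety is that $(P_2-P_1)$ is controlled only in the mixed norm $|||\cdot|||$, so one cannot iterate this identity directly in BV; all BV bounds must be extracted through the quasi-compactness encoded by (\ref{E1}).

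The first step is to promote the smallness assumption on $|||P_1-P_2|||$ to a uniform Lasota-Yorke inequality for $P_2$. Starting from the telescoping identity
$$P_2^n-P_1^n=\sum_{k=0}^{n-1}P_2^{n-1-k}(P_2-P_1)P_1^k,$$
using (\ref{E0}) on the outer $P_2^{n-1-k}$ factor (an $L^1$ contraction) and (\ref{E1}) on the inner $P_1^k$ factor (a BV bound), one obtains
$$\|P_2^n f-P_1^n f\|_1\le |||P_1-P_2|||\sum_{k=0}^{n-1}\bigl(A\alpha^k\|f\|_{BV}+B\|f\|_1\bigr).$$
Choosing $n=n_1$ so that $A\alpha^{n_1}\le\tfrac12 r^{n_1}$ — which inverts to the stated formula for $n_1$ — produces the desired $r^{n_1}$-contraction for a single iterate of $P_2^{n_1}$; chaining this over blocks of length $n_1$ yields a full Lasota-Yorke inequality for $P_2$ with decay rate $r$ and a new $L^1$-constant involving $A$, $B$, and the auxiliary quantities $C=r^{-n_1}$ and $D=A(A+B+2)$.

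Next, I would convert this uniform LY inequality into a resolvent bound for $P_2$ via the finite Neumann expansion
$$(z-P_2)^{-1}f=\sum_{k=0}^{N-1}z^{-k-1}P_2^k f+z^{-N}(z-P_2)^{-1}P_2^N f$$
with $N=n_1+n_2$. Substituting the second resolvent identity one more time to express the tail in terms of $(z-P_1)^{-1}$, the factor $H_{\delta,r}(P_1)$ naturally appears, and $n_2$ is precisely tuned so that the geometric decay $(r/\alpha)^{-n_2}$ dominates $8BDCH_{\delta,r}(P_1)$. Solving for the BV part of $(z-P_2)^{-1}f$ produces the first conclusion of the theorem.

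For the Hölder-type estimate (\ref{E3}), I would re-insert this bound into the second resolvent identity in mixed norm. Two competing estimates arise: a direct one proportional to $|||P_1-P_2|||$ but carrying a BV factor of order $r^{-n}$, and the trivial BV bound. Optimizing over $n$ balances $\alpha^n$ against a power of $|||P_1-P_2|||$ and yields the exponent $\gamma=\ln(r/\alpha)/\ln(1/\alpha)$, which is the exponent that optimally interpolates between the contraction rate $\alpha$ and the chosen ambient rate $r$; the constants $a$ and $b$ are then merely accumulations of the factors $A$, $B$, $D$, $H_{\delta,r}(P_1)$, and $(1-r)^{-1}$ from the preceding steps. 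The main obstacle is bookkeeping: the conceptual structure is transparent, but verifying that the exact threshold $\varepsilon_0(P_1,r,\delta)$ in (\ref{E2}) is what one needs to close every geometric series simultaneously requires carrying each constant explicitly through each telescoping step, and this is where small numerical slips are easiest to commit.
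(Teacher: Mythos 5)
This theorem is quoted verbatim from Keller--Liverani and Liverani (\cite{KL,Li}); the paper does not supply a proof of its own, so there is no internal argument to compare your sketch against. Judged on its own merits as a reconstruction of the Keller--Liverani argument, your outline identifies the correct skeleton: the second resolvent identity as the organizing principle, the telescoping sum $P_2^n-P_1^n=\sum_{k}P_2^{n-1-k}(P_2-P_1)P_1^k$ to move the mixed-norm smallness of $P_1-P_2$ onto powers, the finite Neumann expansion of $(z-P_2)^{-1}$ truncated at $N=n_1+n_2$, and the $n$-optimization balancing $\alpha^n$ against a power of $|||P_1-P_2|||$ to produce the H\"older exponent $\gamma=\ln(r/\alpha)/\ln(1/\alpha)$. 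These are indeed the load-bearing components of the proof in \cite{KL}.

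There is one genuine misstep in the description, though. You frame the first stage as ``promoting the smallness of $|||P_1-P_2|||$ to a uniform Lasota--Yorke inequality for $P_2$.'' This is not needed and is not what the telescoping is for: hypothesis~(\ref{E1}) is assumed for \emph{both} $P_1$ and $P_2$ with the \emph{same} constants $A,\alpha,B$, so the bound $\|P_2^{n_1}f\|_{BV}\le A\alpha^{n_1}\|f\|_{BV}+B\|f\|_1\le \tfrac12 r^{n_1}\|f\|_{BV}+B\|f\|_1$ is available immediately by the definition of $n_1$ and requires no perturbation argument. The real purpose of the telescoping is to control the \emph{mixed}-norm difference $|||P_2^n-P_1^n|||$ in terms of $|||P_2-P_1|||$, which is then fed into the Neumann tail $z^{-N}P_2^N(z-P_2)^{-1}$ together with the a priori bound $H_{\delta,r}(P_1)$ on $\|(z-P_1)^{-1}\|_{BV}$; the quantity $n_2$ is tuned precisely so that $(r/\alpha)^{n_2}\ge 8BDCH_{\delta,r}(P_1)$, making the geometric decay beat the resolvent bound of the unperturbed operator. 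Recasting that stage as ``re-derive LY for $P_2$, then chain blocks of length $n_1$'' both duplicates a hypothesis and obscures where $H_{\delta,r}(P_1)$ enters; if you tried to carry that plan through to the explicit constants $C$, $D$, $a$, $b$, the bookkeeping would not reproduce the stated formulas. Otherwise your description of how $\gamma$ arises and why the threshold $\varepsilon_0$ takes the form it does is faithful to the source argument.
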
 
\begin{corollary}\cite{KL,Li}\label{Co1}
If $|||P_1-P_2|||\le\varepsilon_1(P_1,r,\delta)$ then $\sigma(P_2)\subset V_{\delta,r}(P_1)$. In addition, if 
$|||P_1-P_2|||\le\varepsilon_0(P_1,r,\delta)$, then in each connected component of $V_{\delta,r}(P_1)$ that does not contain $0$ both 
$\sigma(P_1)$ and $\sigma(P_2)$ have the same multiplicity; i.e., the associated spectral projections have the 
same rank.
\end{corollary}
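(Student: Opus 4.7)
The first assertion is almost immediate from Theorem~\ref{Th1}. If $|||P_1-P_2|||\le \varepsilon_1(P_1,r,\delta)$, then the displayed resolvent bound for $(z-P_2)^{-1}$ is valid at every $z\in\mathbb C\setminus V_{\delta,r}(P_1)$, so every such $z$ lies in the resolvent set of $P_2$. Consequently $\sigma(P_2)\subset V_{\delta,r}(P_1)$.

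For the multiplicity statement, fix a connected component $W$ of $V_{\delta,r}(P_1)$ with $0\notin W$. Because $V_{\delta,r}(P_1)$ contains the closed disc of radius $r$, the component $W$ is bounded and disjoint from $\{|z|\le r\}$; in particular it consists of $\delta$-neighborhoods of finitely many eigenvalues of $P_1$ lying outside the essential spectral radius (which by the Remark is bounded by $\alpha<r$). Choose a smooth positively oriented contour $\Gamma\subset\mathbb C\setminus V_{\delta,r}(P_1)$ enclosing $W$ and no other component of $V_{\delta,r}(P_1)$. By the first part, $\Gamma$ also avoids $\sigma(P_2)$, so the Riesz spectral projections
$$\Pi_i=\frac{1}{2\pi i}\oint_\Gamma (z-P_i)^{-1}\,dz,\qquad i=1,2,$$
are well-defined and of finite rank equal to the total algebraic multiplicity of $\sigma(P_i)$ inside $\Gamma$. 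Integrating the estimate (\ref{E3}) along $\Gamma$ gives
$$|||\Pi_1-\Pi_2|||\le |\Gamma|\,M\,|||P_1-P_2|||^{\gamma},$$
where $M$ depends only on $a$, $b$ and the uniform bound of $\|(z-P_1)^{-1}\|_{BV}$ on $\Gamma$; under the hypothesis $|||P_1-P_2|||\le\varepsilon_0$ this quantity can be made as small as desired.

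The final step is a rank comparison. Since $\mathrm{range}(\Pi_1)$ is a finite-dimensional subspace of $BV$, the $BV$ and $L^1$ norms are equivalent there, giving a constant $K>0$ such that $\|f\|_{BV}\le K\|f\|_1$ for every $f\in\mathrm{range}(\Pi_1)$. If $f=\Pi_1 f$ satisfies $\Pi_2 f=0$, then $f=(\Pi_1-\Pi_2)f$, and
$$\|f\|_{BV}\le K\|f\|_1=K\|(\Pi_1-\Pi_2)f\|_1\le K\,|||\Pi_1-\Pi_2|||\,\|f\|_{BV}.$$
Choosing $|||P_1-P_2|||$ small enough (which the bound on $|||\Pi_1-\Pi_2|||$ from the preceding step allows) makes $K\,|||\Pi_1-\Pi_2|||<1$, forcing $f=0$. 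Hence $\Pi_2|_{\mathrm{range}(\Pi_1)}$ is injective and $\mathrm{rank}(\Pi_1)\le\mathrm{rank}(\Pi_2)$; reversing the roles of $P_1$ and $P_2$ gives the reverse inequality and therefore equality.

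\textbf{Main obstacle.} The delicate point is that (\ref{E3}) controls $\Pi_1-\Pi_2$ only in the mixed norm $|||\cdot|||$ (a $BV\to L^1$ operator norm) rather than in the genuine $BV\to BV$ norm, which is what a straightforward Kato-style projection argument would use. The way out, executed above, is to confine the argument to the finite-dimensional subspace $\mathrm{range}(\Pi_1)$, where $\|\cdot\|_{BV}$ and $\|\cdot\|_1$ are equivalent; there the mixed-norm estimate is enough to conclude injectivity. The only subtlety is that the equivalence constant $K$ depends on $P_1$ (and on $W$), but it is fixed once $P_1$ is, so the requirement that $|||P_1-P_2|||$ be small \emph{relative to $K$} is absorbed into the definition of $\varepsilon_0(P_1,r,\delta)$.
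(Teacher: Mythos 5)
The paper states this corollary with a citation to \cite{KL,Li} and offers no proof of its own, so there is nothing internal to compare against; the right question is whether your argument is complete.

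Your treatment of the first assertion and of the inequality $\mathrm{rank}(\Pi_1)\le\mathrm{rank}(\Pi_2)$ is sound: the resolvent bound from Theorem~\ref{Th1} places $\sigma(P_2)$ inside $V_{\delta,r}(P_1)$, the Riesz projections are well defined, (\ref{E3}) controls $|||\Pi_1-\Pi_2|||$, and on the finite-dimensional space $\mathrm{range}(\Pi_1)$ the $BV$ and $L^1$ norms are equivalent with some constant $K=K(P_1)$, which makes the injectivity argument for $\Pi_2|_{\mathrm{range}(\Pi_1)}$ go through. That part is the correct mechanism, and your ``Main obstacle'' paragraph correctly identifies why the mixed norm is usable there.

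The gap is the final sentence: ``reversing the roles of $P_1$ and $P_2$ gives the reverse inequality.'' This is not available, and it is precisely the delicate half of the corollary. Two things break. First, the hypothesis is $|||P_1-P_2|||\le\varepsilon_0(P_1,r,\delta)$, and $\varepsilon_0$ depends on $H_{\delta,r}(P_1)$ (through $n_2$, $\varepsilon_1$, and the bracketed factor), so the setup is genuinely asymmetric in $P_1$ and $P_2$; you have no a priori bound on $H_{\delta,r}(P_2)$ and therefore no license to apply the theorem with $P_2$ in the role of the reference operator. Second, and independently, the argument you ran for the forward direction hinges on the norm-equivalence constant $K$ on $\mathrm{range}(\Pi_1)$, which you correctly note is controlled because $P_1$ is fixed. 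To rule out $\mathrm{rank}(\Pi_2)>\mathrm{rank}(\Pi_1)$, the same scheme would pick a nonzero $v\in\mathrm{range}(\Pi_2)\cap\ker\Pi_1$ and give $\|v\|_1\le|||\Pi_1-\Pi_2|||\,\|v\|_{BV}$; to close this you would need $\|v\|_{BV}\le K_2\|v\|_1$ on $\mathrm{range}(\Pi_2)$, but $K_2$ depends on $P_2$ (it is essentially the condition number of $P_2$ restricted to that generalized eigenspace), and nothing in the hypotheses controls it. This is exactly where one must do additional work — e.g., by using the uniform $BV\to BV$ resolvent bound for $P_2$ that Theorem~\ref{Th1} supplies on $\mathbb C\setminus V_{\delta,r}(P_1)$, together with the common Lasota–Yorke constants $A,B,\alpha$, to produce a constant controlling $\|\cdot\|_{BV}$ in terms of $\|\cdot\|_1$ on $\mathrm{range}(\Pi_2)$ that is independent of $P_2$. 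Without such a step, the proof establishes only one inequality, not equality of ranks.

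A smaller point: you assert that the smallness of $K\,|||\Pi_1-\Pi_2|||$ is ``absorbed into the definition of $\varepsilon_0$.'' That definition involves $H_{\delta,r}(P_1)$, $A$, $B$, $r$, and $n_1$, not $K$ or the length of $\Gamma$ explicitly, so this requires a quantitative check rather than an appeal; it is presumably true (the constants $a$, $b$ and $H_{\delta,r}$ ultimately encode the needed information), but as written it is an unverified claim.
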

\section{Expanding Interval Maps, Perturbations and Holes} 
Let $T$ be  a non-singular interval map and denote by $P$ the Perron-Frobenius operator associated 
with $T$ \cite{Ba}. Typically, and this will be the case for our examples, $T$ will be a Lasota-Yorke map\footnote{
A map $T$ acting from an interval $[a,b]$ to itself is a \emph{Lasota-Yorke map} if
it is piecewise $C^2$ with respect to a finite partition $a=x_0 < x_1 < \dots < x_n =b$, has 
well-defined left and right limits of derivatives up to second order at each $x_i$ and is expanding:
 $\beta:=\inf_{x \neq x_i} |T^\prime(x) | > 1$.}.  
We assume:

\bigskip

\noindent (A1) $\exists\,\ \alpha_0\in (0,1)$, and $B_0\ge 0$ such that $\forall f\in BV(I)$
$$VP f\le\alpha_0 Vf+B_0\| f\| _1.$$ 
\begin{remark}\label{R}
Condition (A1) implies that $\rho=1$ is an eigenvalue of $P$. In particular,  $T$ admits an absolutely continuous invariant measure \cite{Ba, BG}. Moreover, for any $r\in(\alpha_0,1)$ there exists a 
$\overline{\delta}_0>0$, $\overline{\delta}_0$ depends on $r$, such that for any $\delta_0\in(0,\overline{\delta}_0]$ and any 
eigenvalue $\rho_i$ of $P$, with $|\rho_i|>r$, we have:
\begin{enumerate}
\item $B(\rho_i,\delta_0)\cap B(0,r)=\emptyset$;
\item $B(\rho_i,\delta_0)\cap B(\rho_j,\delta_0)=\emptyset$, $i\not= j$.
\end{enumerate}
\end{remark}
\noindent The inequality of assumption (A1) is known as a
Lasota-Yorke inequality\footnote{In fact, (A1) is slightly stronger than the original Lasota-Yorke inequality. In particular, when  $T$ is a general piecewise expanding $C^2$ map the Lasota-Yorke inequality is given by $VPf\le2\beta^{-1} Vf+B_0\| f\| _1$.  See \cite{BG} for details and for generalizations of the original result of \cite{LY}. In certain situations, in particular, when $T$ is piecewise expanding and piecewise onto or when $\inf_x|T'(x)|>2$, the original Lasota-Yorke inequality reduces to (A1). In principle, when dealing with Ulam's scheme for Lasota-Yorke maps, as we do in this paper,  (A1) cannot be relaxed. See \cite{Mu} for details.}.  For a given Lasota-Yorke map with $\beta >2$,  the constant $\alpha_0 = 2/\beta $ in 
inequality (A1) and  $B_0$ may be found in terms of bounds on the second derivative
of the map $T$ and the minimum of $x_{i+1} - x_i$. 
\subsection{Ulam's approximation of $P$} 
Let $\eta$ be a finite partition of $I$ into intervals. Let $\text{mesh}(\eta)$ be the mesh size of $\eta$; i.e, the maximum length of an interval in $\eta$, 
and let $\mathfrak B_{\eta}$ be the finite $\sigma$-algebra associated with $\eta$. 
For $f\in L^1$, let $$\Pi_{\eta}f=\mathbb E(f|\mathfrak B_{\eta}),$$
where $\mathbb E(\cdot|\mathfrak B_{\eta})$ denotes the conditional expectation with respect to 
$\mathfrak B_{\eta}$. Specifically, if $x\in I_{\eta}\in\eta$
$$(\Pi_{\eta}f)(x)=\frac{1}{\lambda(I_{\eta})}\int_{I_{\eta}}fd\lambda.$$
Define
$$P_{\eta}=\Pi_{\eta}\circ P\circ\Pi_{\eta}.$$
$P_{\eta}$ is called Ulam's approximation of $P$. Using the basis $\{\frac{1}{\lambda(I_\eta)}\chi_{I_\eta}\}$ in 
$L^1$, $P_{\eta}$ can be represented by a (row) stochastic matrix acting on vectors from 
$\mathbb R^{d(\eta)}$ by right multiplication: $x \rightarrow xP_\eta$. The entries of Ulam's matrix are given by:
$$P_{I_{\eta}J_{\eta}}=\frac{\lambda(I_{\eta}\cap T^{-1} J_{\eta})}{\lambda(I_{\eta})}.$$
Since $P_{\eta}$ can be represented as stochastic matrix, it has a dominant eigenvalue $\rho_{\eta}=1$ \cite{LT}. Any associated left eigenvectors represent invariant functions $f_{\eta}\in BV(I)$ for the 
operator $P_{\eta}$.
\subsection{Interval maps with holes}
Let $H\subset I$ be an open interval. Denote by $T_{H}\overset{\text{def}}{:=}T_{|X_0}$, where $X_0=I\setminus H$. We call $T_H$ an interval map with a hole; $H$ being the hole. Its Perron-Frobenius operator, which we denote by $P_H$, is defined as follows: for $f\in L^1$ and $n\ge 1$
$$P^n_Hf=P^n(f\chi_{X_{n-1}}),$$
where $X_{n-1}=\cap_{i=0}^{n-1} T^{-i} X_0$, the set of points whose orbits do not meet
the hole $H$ in the first $n-1$ steps.
\begin{definition}\label{Def1}
A probability measure $\mu$ on $[0,1]$, $d\mu=f^*_Hd\lambda$, is said to be an absolutely continuous conditionally 
invariant measure (accim) if there exists $0<e_H<1$ such that $P_Hf^*_H=e_H f^*_H$.
In this case $-\ln e_H$ is the escape rate associated to $\mu$.
\end{definition}

\section{Some Technical Lemmas}
We state (for the most part, without proof) some
well-known technical results to be used in 
our computations later in the paper.

\subsection{Lasota-Yorke inequalities and estimates on the difference of operators in the mixed norm}  

Lasota-Yorke inequalities for $P$ and $P_\eta$ are standard results
from the literature (see \cite{TYLi} for an original source).  The inequality for $P_H$ is not
so well-known, but it is straightforward and we  
derive it below for completeness. 
 \begin{lemma}\label{le2}
The operators $P$ and $P_{\eta}$  satisfy a common Lasota-Yorke inequality as follows:
 $\forall n\in\mathbb N\,\ \forall f\in BV(I)$
$$\|P^nf\| _{BV}\le \alpha_0^n\| f\| _{BV}+\hat B\| f\| _1$$
$$\| P_{\eta}^nf\| _{BV}\le \alpha_0^n\| f\| _{BV}+\hat B\| f\| _1$$
with $\hat B=1+\frac{B_0}{1-\alpha_0}$, independent of $\eta$.  For $T_H$, the map with a hole, and 
under the stronger assumption $\alpha_0<1/3$\footnote
{The assumption that $\alpha_0<1/3$ can be relaxed simply to $\alpha_0<1$. This relaxation can still produce a common Lasota-Yorke inequality for $P$, $P_{\eta}$ and $P_H$ with constants $A$ and $B$ independent of $H$. A common Lasota-Yorke inequality for $P$ and $P_H$ with $\alpha_0<1$ can be found in section 7 of \cite{LM}.  Hence the results of this paper are still valid for $1/3<\alpha_0<1$ using the appropriate Lasota-Yorke inequaility. For the purpose of the examples which we want to present in Section \ref{comp}, it is more sensible to use the constants produced by Lemma \ref{le2}.} 
we have:  $\forall n\in\mathbb N\,\ \forall f\in BV(I)$
$$\| P_H^nf\| _{BV}\le \alpha^n\| f\| _{BV}+B\| f\| _1,$$
where $\alpha=3\alpha_0<1$ and $B=\frac{2\alpha_0+B_0}{1-\alpha}$ with constants
independent of $H$.
\end{lemma}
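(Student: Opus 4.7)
The first two inequalities are essentially consequences of (A1): for $P$ one iterates the one-step bound $VPf\le \alpha_0 Vf+B_0\|f\|_1$ and uses $L^1$-contractivity, and for $P_\eta$ one additionally uses that conditional expectation against an interval partition is a contraction in both the total variation and $L^1$ norms (so the three factors in $\Pi_\eta P\Pi_\eta$ give back the same one-step inequality). Since the paper states these are standard, my effort would concentrate on the $P_H$ bound.

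The plan for $P_H$ is to obtain a one-step Lasota-Yorke inequality and then iterate. Writing $P_Hf=P(f\chi_{X_0})$ and applying (A1) gives
\[
V(P_Hf)\le \alpha_0\, V(f\chi_{X_0})+B_0\,\|f\chi_{X_0}\|_1\le \alpha_0\, V(f\chi_{X_0})+B_0\|f\|_1,
\]
so the task reduces to controlling $V(f\chi_{X_0})$ in terms of $Vf$ and $\|f\|_1$. Since $H$ is an open interval with at most two endpoints interior to $I$, truncating $f$ at these points adds at most two jumps of size $|f(a^-)|$ and $|f(b^+)|$, giving $V(f\chi_{X_0})\le Vf+2\|f\|_\infty$. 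I would then invoke the pointwise bound $\|f\|_\infty\le Vf+\|f\|_1=\|f\|_{BV}$ (valid on the unit interval, since some point has $|f|\le \|f\|_1$ and any two values differ by at most $Vf$). Substituting back yields the one-step inequality
\[
V(P_Hf)\le 3\alpha_0\, Vf+(2\alpha_0+B_0)\|f\|_1.
\]

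The assumption $\alpha_0<1/3$ enters precisely here, so that $\alpha:=3\alpha_0<1$ makes this a genuine Lasota-Yorke contraction. Iterating and summing the geometric series produces
\[
V(P_H^nf)\le \alpha^n Vf+\frac{2\alpha_0+B_0}{1-\alpha}\|f\|_1,
\]
and combining with the $L^1$-contraction $\|P_H^nf\|_1\le \|f\|_1$ gives the stated $BV$-norm inequality with $B=\frac{2\alpha_0+B_0}{1-\alpha}$, which is manifestly independent of $H$ because the constant $2$ in the boundary-jump estimate did not depend on the location or length of $H$.

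The only real subtlety is step 3: truncation by a characteristic function is not an $L^1$-or-$BV$ contraction on variation alone, and the two boundary jumps must be absorbed into the $\|f\|_1$ part of the inequality rather than into $Vf$. This forces the appearance of the factor $3$ in front of $\alpha_0$, and hence the hypothesis $\alpha_0<1/3$. The footnote in the lemma indicates this can be relaxed by sharper bookkeeping (as in \cite{LM}), but under $\alpha_0<1/3$ the argument above is both short and yields explicit, $H$-independent constants suitable for the computations in Section \ref{comp}.
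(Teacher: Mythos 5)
Your proof is correct and follows essentially the same path as the paper's: apply (A1) to $f\chi_{X_0}$, bound $V(f\chi_{X_0})\le Vf+2\|f\|_\infty$ using the two boundary jumps introduced by the hole, absorb $\|f\|_\infty\le Vf+\|f\|_1$ to obtain the one-step inequality $V(P_Hf)\le 3\alpha_0\,Vf+(2\alpha_0+B_0)\|f\|_1$, and iterate with a geometric series. The only difference is expository: the paper's proof is written as a single chained display, while you spell out the source of the factor $3$ and the role of $\alpha_0<1/3$ more explicitly.
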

\begin{proof} For the operator $P_H$, 
by assumption (A1), for $f\in BV(I)$ we have, 
\begin{equation}\label{LY:hole}
\begin{split}
VP_Hf&=VP(f\chi_{X_0})\le\alpha_0V(f\chi_{X_0})+B_0\| f\| _1\\
&\le\alpha_0(Vf+2\sup_{x\in I}f(x))+B_0\| f\| _1\\
&\le\alpha_0(Vf+2Vf+2\| f\| _1)+B_0\| f\| _1\\
&=\alpha Vf+(2\alpha_0+B_0)\| f\| _1.
\end{split}
\end{equation}
Therefore, 
$$VP_H^{n}f\le\alpha^{n}Vf+(\sum_{k=1}^{n}\alpha^{k-1})(2\alpha_0+B_0)\| f\| _1,$$ 
and consequently, for all $n\ge 1$
$$\| P_H^nf\| _{BV}\le\alpha^n\| f\| _{BV}+B\| f\| _1.$$
 \end{proof}

\begin{remark}\label{Re:LY}
Since $\alpha_0 < \alpha$ and $\hat B < B$ in the above lemma, we can obtain 
a common Lasota-Yorke inequality, independent of $H$ and $\eta$ using coefficients
$\alpha$ and $B$: 
$$\| P_*^n f \|_{BV} \leq \alpha^n \| f \|_{BV} + B \| f\|_1$$
where $P_*$ represents any of the three operators under discussion. 
For ease of exposition, we will use this common inequality 
in what follows. (However, see Example \ref{Ex2} in Section \ref{comp} for a discussion indicating 
how using the strongest possible inequality can significantly reduce 
computational overhead.)
\end{remark}
\begin{lemma}\label{le2'}
Let $\Gamma=\max\{\alpha_0+1,B_0\}$ and $\varepsilon=\text{mesh}(\eta)$. 
\begin{enumerate}
\item $|||P_{\eta}-P|||\le \Gamma\varepsilon$.
\item If $\lambda(H)\le \Gamma\varepsilon$ then $|||P_{\eta}-P_H|||\le2\Gamma\varepsilon$.
\end{enumerate}
\end{lemma}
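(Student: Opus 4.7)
The plan is to bound each of the two mixed-norm differences using two standard ingredients: the contraction estimate $\|(\Pi_\eta - I)g\|_1 \le \text{mesh}(\eta)\cdot Vg$ valid for any $g\in BV(I)$, and the pointwise bound $\sup_x|f(x)|\le \|f\|_{BV}$ that holds for BV functions on a bounded interval.

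For part (1), the direct comparison of $\Pi_\eta P\Pi_\eta$ with $P$ is awkward, so I would first telescope:
$$P_\eta - P = \Pi_\eta P(\Pi_\eta-I) + (\Pi_\eta-I)P.$$
The first summand is bounded in $L^1$ by $\|(\Pi_\eta-I)f\|_1\le \varepsilon\, Vf$, using that $\Pi_\eta$ and $P$ are weak contractions on $L^1$. For the second summand I would apply the conditional expectation estimate and then (A1):
$$\|(\Pi_\eta - I)Pf\|_1 \le \varepsilon\, V(Pf) \le \varepsilon(\alpha_0 Vf + B_0\|f\|_1).$$
Summing the two contributions yields $\|(P_\eta-P)f\|_1 \le \varepsilon((1+\alpha_0)Vf + B_0\|f\|_1)$, and replacing both coefficients by $\Gamma = \max\{\alpha_0+1, B_0\}$ produces the bound $\Gamma\varepsilon\|f\|_{BV}$.

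For part (2), I would split $P_\eta - P_H = (P_\eta - P) + (P - P_H)$ and invoke part (1) for the first summand. For the second, the key observation is the identity $(P-P_H)f = P(f\chi_H)$, so that
$$\|(P-P_H)f\|_1 \le \|f\chi_H\|_1 \le \lambda(H)\,\sup_x|f(x)|.$$
The pointwise bound $\sup_x|f(x)|\le \|f\|_{BV}$ then gives $|||P-P_H|||\le \lambda(H)\le \Gamma\varepsilon$, and the triangle inequality finishes the proof.

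Both estimates are essentially routine; the only point requiring any care is remembering to telescope in part (1) rather than trying to estimate $\Pi_\eta P\Pi_\eta - P$ directly, which is what forces the constant into the form $\max\{\alpha_0+1, B_0\}$ instead of something cruder. I expect no real obstacle here.
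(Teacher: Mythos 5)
Your proof is correct and follows the same route as the paper: the paper declares part (1) to be ``standard'' without details, and for part (2) it uses exactly the triangle inequality $\|(P_\eta-P_H)f\|_1\le\|(P_\eta-P)f\|_1+\|(P-P_H)f\|_1$ together with $\|(P-P_H)f\|_1=\|P(f\chi_H)\|_1\le\lambda(H)\|f\|_{BV}$ as you do. Your telescoping $P_\eta-P=\Pi_\eta P(\Pi_\eta-I)+(\Pi_\eta-I)P$ with the estimates $\|(\Pi_\eta-I)g\|_1\le\varepsilon Vg$ and (A1) is precisely the standard argument the paper is alluding to, so there is no genuine difference in approach.
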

\begin{proof}
The first statement is standard. For the proof of the second statement, let $f\in BV(I)$ and observe that
\begin{equation*}
\begin{split}
\| (P_{\eta}-P_H)f\| _1&\le\| (P_{\eta}-P)f\| _1+\| (P-P_H)f\| _1\\
&\le \varepsilon\Gamma \| f\| _{BV}+ \lambda(H)\|f \| _{BV}\le 2\varepsilon\Gamma \| f\| _{BV}.
\end{split}
\end{equation*}
\end{proof}
\subsection{Computer-assisted estimates on the spectrum of P} 
All the constants arising in Theorem \ref{Th1} are (in principle) computable for the 
finite-dimensional operator $P_{\eta}$\footnote{In fact, more precisely, 
we make our computations on the matrix representation of  $P_{\eta}$
acting on the basis $\{ \frac{1}{\lambda(I_\eta)}\chi_{I_\eta}\}$  as in Section 3.}.   Thus, as
proposed in  \cite{Li}, we are going to apply Theorem \ref{Th1} with $P_{\eta}$ as $P_1$ and $P$ as the perturbation $P_2$. This entails some \emph{a priori} estimates.  
\begin{lemma}\label{Le3}
Given $P$, $\delta>0$ and $r\in(\alpha,1)$, there exists $\varepsilon_2>0$ such that for each $\eta$ with 
$0<\text{mesh}(\eta)\le\varepsilon_2$, we have
\begin{equation}\label{E4}
\text{mesh}(\eta)\le(2\Gamma)^{-1}\varepsilon_0(P_{\eta},r,\delta),
\end{equation}
and
\begin{equation}\label{E4'}
|||P_{\eta}-P|||\le\frac{1}{2}\varepsilon_0(P_{\eta},r,\delta).
\end{equation}
\end{lemma}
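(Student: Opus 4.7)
The plan is to observe that $\varepsilon_{0}(P_{\eta},r,\delta)$ depends on $P_{\eta}$ only through the resolvent bound $H_{\delta,r}(P_{\eta})$ (the other ingredients $A,B,\alpha,r,n_{1},D$ are determined by the common Lasota--Yorke constants of Remark \ref{Re:LY} and by the choices of $r,\delta$), and that $n_{2}$ depends on $H_{\delta,r}(P_{\eta})$ only logarithmically. So the whole argument reduces to producing a uniform upper bound $H_{\delta,r}(P_{\eta})\le H^{*}$ valid for all sufficiently fine partitions $\eta$; once we have this, $\varepsilon_{0}(P_{\eta},r,\delta)\ge\varepsilon^{*}>0$ uniformly in $\eta$, and the conclusion follows by choosing $\varepsilon_{2}$ with $\Gamma\varepsilon_{2}\le\tfrac{1}{2}\varepsilon^{*}$ (which gives (\ref{E4'}) via Lemma \ref{le2'}(1)) and $\varepsilon_{2}\le(2\Gamma)^{-1}\varepsilon^{*}$ (which gives (\ref{E4})).

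First I would run Theorem \ref{Th1} in the ``opposite direction,'' with $P_{1}=P$ and $P_{2}=P_{\eta}$. Fix auxiliary parameters $r<r^{\prime}<1$ and $0<\delta^{\prime}<\delta/2$ adapted to $P$, so that $V_{\delta^{\prime},r^{\prime}}(P)$ is a small enlargement of $V_{\delta,r}(P)$ that still excludes any peripheral eigenvalues of $P$ one cares about. By Lemma \ref{le2'}(1), $|||P_{\eta}-P|||\le\Gamma\,\mathrm{mesh}(\eta)$, so by taking $\mathrm{mesh}(\eta)$ small we may ensure $|||P_{\eta}-P|||\le\varepsilon_{0}(P,r^{\prime},\delta^{\prime})$. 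Theorem \ref{Th1} then furnishes, for every $z\notin V_{\delta^{\prime},r^{\prime}}(P)$, a bound
\[
\|(z-P_{\eta})^{-1}f\|_{BV}\le C_{1}\|f\|_{BV}+C_{2}\|f\|_{1},
\]
with $C_{1},C_{2}$ depending only on $P,r^{\prime},\delta^{\prime}$, in particular independent of $\eta$.

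Next, Corollary \ref{Co1} gives spectral stability: the portion of $\sigma(P_{\eta})$ lying outside $\overline{B(0,r^{\prime})}$ is contained in the $\delta^{\prime}$--neighborhood of $\sigma(P)$, and has the same total multiplicity as $\sigma(P)$ in that region, for all sufficiently fine $\eta$. Combined with the choices $\delta^{\prime}<\delta/2$ and $r^{\prime}>r$, this implies the set inclusion
\[
\mathbb{C}\setminus V_{\delta,r}(P_{\eta})\subset \mathbb{C}\setminus V_{\delta^{\prime},r^{\prime}}(P),
\]
(i.e.\ points $z$ that are both outside $B(0,r)$ and at distance $>\delta$ from $\sigma(P_{\eta})$ are automatically at distance $>\delta^{\prime}$ from $\sigma(P)$ and satisfy $|z|>r^{\prime}$, once $\eta$ is fine enough). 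Consequently the uniform resolvent bound above applies on $\mathbb{C}\setminus V_{\delta,r}(P_{\eta})$, yielding $H_{\delta,r}(P_{\eta})\le H^{*}=C_{1}+C_{2}$ independent of $\eta$.

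The main obstacle is really this last inclusion: it is the point where one has to combine the norm convergence $|||P_{\eta}-P|||\to 0$ with the multiplicity part of Corollary \ref{Co1} in order to transfer a resolvent bound stated in terms of $\sigma(P)$ into one stated in terms of $\sigma(P_{\eta})$. Once this uniform $H^{*}$ is in hand, the formulas defining $n_{2}$, $\varepsilon_{1}$ and $\varepsilon_{0}$ immediately produce the uniform lower bound $\varepsilon^{*}$, and the choice of $\varepsilon_{2}=\min\{(2\Gamma)^{-1}\varepsilon^{*},\varepsilon^{*}/(2\Gamma)\}$ (taken small enough that all the previous smallness requirements on $\mathrm{mesh}(\eta)$ are also met) completes the proof.
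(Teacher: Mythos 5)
Your overall strategy is the right one, and it is essentially the content of Lemma 4.2 of Liverani's paper, which is what the authors here cite in lieu of a proof: run Theorem \ref{Th1} with $P_{1}=P$, $P_{2}=P_{\eta}$ at auxiliary parameters $(r',\delta')$ to extract a uniform bound $H_{\delta,r}(P_{\eta})\le H^{*}$, observe that the formula for $\varepsilon_{0}(P_{\eta},r,\delta)$ depends on $P_{\eta}$ only through this quantity (and only logarithmically, via $n_{2}$), conclude a uniform positive lower bound $\varepsilon^{*}$ on $\varepsilon_{0}(P_{\eta},r,\delta)$, and then let Lemma \ref{le2'}(1) do the rest (note that (\ref{E4}) already implies (\ref{E4'}), so one choice of $\varepsilon_{2}$ suffices). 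You also correctly identify that the delicate point is the set inclusion transferring the resolvent bound from the $\sigma(P)$ picture to the $\sigma(P_{\eta})$ picture.

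However, your choice of auxiliary radius is backwards, and as written the key inclusion fails. You take $r'>r$ and claim
$\mathbb{C}\setminus V_{\delta,r}(P_{\eta})\subset\mathbb{C}\setminus V_{\delta',r'}(P)$,
equivalently $V_{\delta',r'}(P)\subset V_{\delta,r}(P_{\eta})$. But with $r'>r$ the disk $\{|z|\le r'\}$ sits inside $V_{\delta',r'}(P)$, while $V_{\delta,r}(P_{\eta})$ only contains the smaller disk $\{|z|\le r\}$ together with $\delta$-balls about the finitely many eigenvalues of $P_{\eta}$; the annulus $\{r<|z|\le r'\}$ is certainly not covered by those finitely many balls. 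Concretely, a point $z$ with $r<|z|\le r'$ lying far from $\sigma(P_{\eta})$ belongs to $\mathbb{C}\setminus V_{\delta,r}(P_{\eta})$ but not to $\mathbb{C}\setminus V_{\delta',r'}(P)$, so the resolvent estimate from Theorem \ref{Th1} does not cover it and you cannot conclude $H_{\delta,r}(P_{\eta})\le C_{1}+C_{2}$. The fix is to take $\alpha<r'<r$ (choosing $r'$ to avoid the moduli of the finitely many eigenvalues of $P$ outside $\overline{B(0,\alpha)}$), and to take $\delta'<\delta/2$ small enough that the $\delta'$-balls about those eigenvalues of $P$ are pairwise disjoint and disjoint from $\overline{B(0,r')}$, and that $r'+\delta'<r$. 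Then: if $z\in V_{\delta',r'}(P)$, either $|z|\le r'<r$, or $z$ is within $\delta'$ of some $\rho\in\sigma(P)$ with $|\rho|\le r'$ (so $|z|\le r'+\delta'<r$), or $z$ is within $\delta'$ of some $\rho$ with $|\rho|>r'$, in which case the multiplicity part of Corollary \ref{Co1} puts an eigenvalue $\rho_{\eta}$ of $P_{\eta}$ in $B(\rho,\delta')$ and hence $\operatorname{dist}(z,\sigma(P_{\eta}))\le 2\delta'<\delta$; in every case $z\in V_{\delta,r}(P_{\eta})$, which is exactly the inclusion you need. With this correction your argument goes through.
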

\begin{proof}
See Lemma 4.2 of \cite{Li}.
\end{proof}
The computation of a lower bound on $\varepsilon_0(P_{\eta},\delta,r)$ follows the 
argument in 
Lemma 3.10 of \cite{WB} the only difference arising from the fact that
here, $P$ is associated to 
$T$ and not $T_H$ as in \cite{WB}.  The key idea is to estimate the 
BV-norm of the resolvent of $P_\eta$ (difficult to compute) by 
the $\| \cdot \|_1-$norm.  
Hence, following Lemma 3.10 of \cite{WB} we define
$$H^*_{\delta,r}(P_{\eta})=\sup\{(\frac{B_0}{r-\alpha_0}+1)\| (z-P_{\eta})^{-1} v\| _1+\frac{1}{r-\alpha_0}+\frac{2}{r}:
 \| v\| _{1}=1, z\in\mathbb C\setminus V_{\delta, r}(P_{\eta})\},$$
 \begin{equation*}
\begin{split}
\varepsilon_0^*(P_{\eta},\delta,r)&\overset{\text{def}}{=}
\min\{\frac{r^{n_1+\lceil\frac{\ln 8BDCH^*_{\delta,r}(P_\eta)}{\ln r/\alpha}\rceil}}{8B(H^*_{\delta,r}
(P_{\eta})B+(1-r)^{-1})},\\
&\hskip 3 cm\left[\frac{r^{n_1}}{4B\left(H^*_{\delta,r}(P_{\eta})(D+B)+2(1+B)+(1-r)^{-1}\right)}\right]^{\gamma}\}.
\end{split}
\end{equation*}
\begin{lemma}\label{Pro1}\text{ }
\begin{enumerate}
\item $\varepsilon_0^*(P_{\eta},\delta,r)$ is uniformly bounded below;
\item $\varepsilon_0^*(P_{\eta},\delta,r)\le \varepsilon_0(P_{\eta},\delta,r)$; 
\item $\text{mesh}(\eta)\le(2\Gamma)^{-1}\varepsilon_0^*(P_{\eta},\delta,r)$ implies $\text{mesh}(\eta)$ satisfies 
(\ref{E4}).
\end{enumerate}
\end{lemma}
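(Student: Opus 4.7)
The plan is to prove the three claims in reverse order of difficulty. Claim (3) is a direct consequence of claim (2) combined with Lemma~\ref{Le3}: if $\text{mesh}(\eta)\le(2\Gamma)^{-1}\varepsilon_0^*(P_{\eta},\delta,r)$ and $\varepsilon_0^*\le\varepsilon_0$, then $\text{mesh}(\eta)\le(2\Gamma)^{-1}\varepsilon_0(P_{\eta},\delta,r)$, which is precisely (\ref{E4}). So I would only need to prove (1) and (2).

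For claim (2), I would show that $H^*_{\delta,r}(P_{\eta})\ge H_{\delta,r}(P_{\eta})$; the inequality $\varepsilon_0^*\le\varepsilon_0$ then follows because $H_{\delta,r}(P_{\eta})$ appears only in denominators (and inside $n_2$, through $r^{n_2}$ with $r<1$) in the formulas of Theorem~\ref{Th1}, and the term $2A(A+B)$ matches $2(1+B)$ since $A=1$ in our Lasota--Yorke inequality from Lemma~\ref{le2}. The key estimate is obtained by fixing $f\in BV$ with $\|f\|_{BV}\le 1$, $z\in\mathbb{C}\setminus V_{\delta,r}(P_\eta)$, and setting $g=(z-P_{\eta})^{-1}f$. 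From the resolvent equation $zg=f+P_{\eta}g$ and the one-step Lasota--Yorke inequality $V(P_{\eta}g)\le\alpha_0 V(g)+B_0\|g\|_1$ (which follows from (A1) since $\Pi_\eta$ does not increase variation or $L^1$ norm), one obtains
$$
(|z|-\alpha_0) V(g)\le V(f)+B_0\|g\|_1,
$$
so that, using $|z|>r$ and $V(f)\le 1$,
$$
\|g\|_{BV}=V(g)+\|g\|_1\le \frac{1}{r-\alpha_0}+\Bigl(\frac{B_0}{r-\alpha_0}+1\Bigr)\|g\|_1.
$$
Since $\|f\|_1\le\|f\|_{BV}\le 1$, the $L^1$-norm $\|g\|_1$ is dominated by $\sup_{\|v\|_1=1}\|(z-P_{\eta})^{-1}v\|_1$. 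Taking the supremum over $f$ and $z$ gives $H_{\delta,r}(P_{\eta})\le H^*_{\delta,r}(P_{\eta})$ (the extra $2/r$ in the definition of $H^*$ only adds slack, and can be traced from a parallel bookkeeping for the $P_\eta g/z$ term).

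For claim (1), the only $\eta$-dependent quantity in the definition of $\varepsilon_0^*$ is $H^*_{\delta,r}(P_\eta)$, since $n_1, C, D, \gamma, B$ depend only on the common Lasota--Yorke constants. I would establish a uniform upper bound on $H^*_{\delta,r}(P_\eta)$ by applying Theorem~\ref{Th1} with $P_1=P$ and $P_2=P_\eta$: for $\text{mesh}(\eta)$ small enough, Lemma~\ref{le2'} forces $|||P-P_\eta|||\le\varepsilon_0(P,r,\delta)$, so the resolvent $(z-P_\eta)^{-1}$ is uniformly bounded (on $BV$, and hence on $L^1$ after a Neumann-series argument based on $\|P_\eta\|_{L^1}\le 1$, taking care of $|z|>1$ separately via $\|(z-P_\eta)^{-1}\|_{L^1}\le(|z|-1)^{-1}$) for all $z\in\mathbb{C}\setminus V_{\delta,r}(P)$. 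Combining with Corollary~\ref{Co1}, which ensures that $V_{\delta,r}(P_\eta)$ is eventually contained in $V_{2\delta,r}(P)$, I can control $z\notin V_{\delta,r}(P_\eta)$ via slightly enlarged neighborhoods of $\sigma(P)$, preserving a uniform bound.

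The main obstacle is precisely this last step: the resolvent bound from Theorem~\ref{Th1} is naturally a $BV$ bound, whereas $H^*$ is phrased in terms of $L^1$ action on $L^1$ inputs $v$ with $\|v\|_1=1$ (which need not lie in $BV$). I would handle this by exploiting that $P_\eta$ has finite rank (its range sits inside the step-function space of the partition), so $(z-P_\eta)^{-1}v=v/z+P_\eta(z-P_\eta)^{-1}v/z$ decomposes into a multiple of $v$ plus a step function whose variation and $L^1$ norm are controlled by $\|(z-P_\eta)^{-1}v\|_1$ through the Lasota--Yorke inequality and the $L^1$ contraction property of $P_\eta$. Tracking the constants carefully through this decomposition uniformly in $\eta$ and $z$ is the delicate part of the argument.
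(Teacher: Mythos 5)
The paper does not actually prove Lemma~\ref{Pro1}; it only says ``Follow the proof of Lemma 3.12 of \cite{WB} verbatim,'' so there is no in-text argument to compare against. Your reconstruction of parts (2) and (3) is sound: the resolvent identity $zg = f + P_\eta g$ combined with the one-step Lasota--Yorke inequality for $P_\eta$ (with the sharp constants $\alpha_0, B_0$, not the weakened $\alpha, B$) gives $(|z|-\alpha_0)V(g)\le V(f)+B_0\|g\|_1$, which, after using $|z|>r$ and $V(f)\le\|f\|_{BV}\le 1$, shows $H_{\delta,r}(P_\eta)\le H^*_{\delta,r}(P_\eta)$ (the extra $2/r$ is indeed pure slack). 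Since $A=1$ here, the term $2A(A+B)=2(1+B)$, and $\varepsilon_0^*$ is monotone decreasing in $H^*$ (both directly and via $n_2$ in the exponent, with $r<1$), part (2) follows, and (3) then follows from (2) and the definition of (\ref{E4}). This is almost certainly the intended argument from \cite{WB}.

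Part (1) is where your proposal genuinely breaks down, in two distinct places. First, the decomposition $(z-P_\eta)^{-1}v=v/z+P_\eta(z-P_\eta)^{-1}v/z$ is circular: writing $w=(z-P_\eta)^{-1}v$ and using only $\|P_\eta\|_{L^1\to L^1}\le 1$ you obtain $\|w\|_1\le r^{-1}(1+\|w\|_1)$, and since $r<1$ the coefficient $1/r>1$ has the \emph{wrong sign} -- this rearranges to a lower bound for $\|w\|_1$, not an upper bound. A straightforward substitute, peeling off $n$ iterates and estimating $\|P_\eta^n v\|_{BV}$ via the Lasota--Yorke inequality, costs a factor $r^{-n}$ against a gain of only $\alpha_0^n/\mathrm{mesh}(\eta)$, and since $r>\alpha_0$ the resulting bound still diverges (polynomially in $1/\mathrm{mesh}(\eta)$). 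So finite rank plus $L^1$-contraction, by themselves, do not give the uniform $L^1$ resolvent bound; one must exploit the spectral gap more structurally (e.g.\ splitting off the rank-one projection onto the invariant density and controlling the complementary block, as is done concretely in (\ref{ex4:eq}) of Section~\ref{comp}). Second, the domain issue is also not handled: Corollary~\ref{Co1} gives $\sigma(P_\eta)\subset V_{\delta,r}(P)$, hence $V_{\delta,r}(P_\eta)\subset V_{2\delta,r}(P)$, but this is the \emph{wrong} inclusion for your purpose -- it tells you $\mathbb C\setminus V_{2\delta,r}(P)\subset\mathbb C\setminus V_{\delta,r}(P_\eta)$, so a uniform resolvent bound on $\mathbb C\setminus V_{\delta,r}(P)$ does not automatically cover every $z\in\mathbb C\setminus V_{\delta,r}(P_\eta)$ (there may be $z$ far from $\sigma(P_\eta)$ but within $\delta$ of $\sigma(P)$). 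You would need the companion inclusion (via Corollary~\ref{Co1} with the roles reversed, shrinking $\delta$) to close that gap. As written, part (1) of your proof is incomplete.
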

 \begin{proof}
Follow the proof of Lemma 3.12 \cite{WB} verbatum. 
\end{proof}
\section{Main result}
Now we have our tools ready to use the computer and rigorously solve the following problem: 
Given a map $T$ satisfying (A1) and given a  number $\ell>0$ compute a number $\varepsilon>0$ such that if
$\lambda(H) < \Gamma \varepsilon$ then the map $T_H$ has an accim with escape rate 
$-\ln e_H < -\ln( 1 - \ell)$.

The critical step is to obtain control on the separation of the point spectrum of $P$ outside 
the essential spectral radius $\alpha$. Naturally, from a computational viewpoint we can 
only really do this for $P_\eta$ after which we use Theorem \ref{Th1} to transfer the
picture to the spectrum of $P$. 

More precisely, the following algorithm will, given the number $\ell$ with  $0< \ell< 1-\alpha$, compute a number $\delta = \delta_{com}$ with $0<\delta<\ell$ and $\varepsilon =\varepsilon_{com} > 0$ such that with $r=1-\ell$, and any $\eta$ with $\text{mesh}(\eta)< \varepsilon$
\begin{enumerate}
\item $\text{mesh}(\eta)\le(2\Gamma)^{-1}\varepsilon_0(P_{\eta},r,\delta)$;
\item $B(1,\delta)\cap B(\rho_i,\delta)=\emptyset$, whenever $\rho_i$ is an eigenvalue of $P_{\eta}$, $\rho_i\notin B(1,\delta)$ and $|\rho_i|>r$.
\end{enumerate}
Thus we obtain the required spectral separation (near the eigenvalue 1) for $P_\eta$ as well as 
the conditions necessary to
apply Theorem \ref{Th1}. 
\begin{algorithm}\label{alg} $T$ and $\ell$ given as above, then
\begin{enumerate}
\item Set $r=1-\ell$. 
\item Pick $\delta=\frac{1}{k}<\ell$, $k\in \mathbb N$.
\item Feed in a partition of $I$ into intervals. Call it $\eta$.
\item Compute $\varepsilon$ the mesh size of $\eta$.
\item Find $P_{\eta}=(P_{I_{\eta}J_{\eta}})$ where 
$$P_{I_{\eta}J_{\eta}}=\frac{\lambda(I_{\eta}\cap T^{-1} J_{\eta})}{\lambda(I_{\eta})}.$$
\item Compute the following: $H^*_{\delta,r}(P_{\eta})$, $n_1=\lceil \frac{\ln 2}{\ln r/\alpha}\rceil$, $C=r^{-n_1}$, $D=3+B$, 
$n_2=\lceil\frac{\ln 8BDCH^*_{\delta,r}(P_{\eta})}{\ln r/\alpha}\rceil $, $\gamma=\frac{\ln(r/\alpha)}{\ln(1/\alpha)}$, 
$B=\frac{1-\alpha_0+B_0}{1-\alpha}$, $\Gamma=\max\{1+\alpha_0,B_0\}.$
\item Check if $\varepsilon\le(2\Gamma)^{-1}\varepsilon_0^*(P_{\eta},\delta,r)$.\\
If (7) is not satisfied, feed in a new $\eta$ with a smaller mesh size and repeat (3)-(7); otherwise, continue.\\
\item List the eigenvalues of $P_{\eta}$ whose modulus is bigger than $r$: $\rho_{\eta,i}$, $i=1,\dots, d$. 
\item Define:
$$CL=\{\text{ all the eigenvalues from the list which are in } B(1,\delta)\}.$$
\item Check that if $\rho_{\eta,i}\notin CL$, then $\overline{B}(\rho_{\eta,i},\delta)\cap \overline{B}(1,\delta)
=\emptyset$.
\item If (10) is satisfied, report $\delta_{\text{com}}:=\delta$ and $\varepsilon_{\text{com}}:=\varepsilon$; otherwise, multiply $k$ by $2$ and repeat steps 
(2)-(11) starting with the last $\eta$ that satisfied (7).
\end{enumerate}
\end{algorithm}
\begin{proposition}\label{A}
Algorithm \ref{alg} stops after finitely many steps.
\end{proposition}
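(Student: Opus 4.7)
The plan is to verify termination of both loops in Algorithm \ref{alg}: the inner loop refines $\eta$ for fixed $\delta$ until step (7) holds, and the outer loop halves $\delta$ (by doubling $k$) whenever step (10) fails.

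For the inner loop, Lemma \ref{Pro1}(1) says $\varepsilon_0^*(P_\eta,\delta,r)$ admits a uniform lower bound in $\eta$, so the threshold at step (7) has a positive infimum and successive refinements driving $\text{mesh}(\eta)\to 0$ will satisfy (7) in finitely many iterations.

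For the outer loop, I would first apply Remark \ref{R} to $P$ to fix $\bar\delta_0 > 0$ such that for every $\delta' \in (0,\bar\delta_0]$ the balls $B(\rho_i,\delta')$ around distinct eigenvalues $\rho_i$ of $P$ with $|\rho_i|>r$ are pairwise disjoint and disjoint from $B(0,r)$. Since $\delta = 1/k \to 0$, eventually $\delta < \bar\delta_0$. At any such outer iteration the completed inner loop produces $\eta$ with $\text{mesh}(\eta) \le (2\Gamma)^{-1}\varepsilon_0(P_\eta,\delta,r)$ by Lemma \ref{Pro1}(2,3), so Lemma \ref{le2'}(1) gives $|||P_\eta - P||| \le \tfrac{1}{2}\varepsilon_0(P_\eta,\delta,r)$. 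Corollary \ref{Co1} applied with $P_1 = P_\eta$ and $P_2 = P$ then yields $\sigma(P)\subset V_{\delta,r}(P_\eta)$ together with preservation of algebraic multiplicity on every connected component of $V_{\delta,r}(P_\eta)$ not containing $0$.

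To conclude step (10), let $U$ be the connected component of $V_{\delta,r}(P_\eta)$ containing $1$. The a priori separation ($\delta<\bar\delta_0$) forces $1$ to be the only eigenvalue of $P$ in $U$, and since $1$ is simple in $P$ (the usual mixing situation) multiplicity preservation makes it the only eigenvalue of $P_\eta$ in $U$ as well, consistent with $1\in\sigma(P_\eta)$ by Perron-Frobenius. Thus $U = \bar B(1,\delta)$, so every other $\rho_{\eta,i}\in\sigma(P_\eta)$ with $|\rho_{\eta,i}|>r$ lies in a distinct component of $V_{\delta,r}(P_\eta)$ and satisfies $\bar B(\rho_{\eta,i},\delta)\cap\bar B(1,\delta) = \emptyset$; this is exactly (10), and the outer loop terminates. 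The hard part is the direction-swap in Corollary \ref{Co1}: the inner-loop bound is phrased via $\varepsilon_0^*(P_\eta,\delta,r)$ while the a priori spectral separation concerns $\sigma(P)$, and Corollary \ref{Co1} applied with $P_\eta$ as reference, read through multiplicity preservation on the component containing $1$, is the bridge.
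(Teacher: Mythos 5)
Your inner-loop argument is fine and matches the paper. The trouble is in the outer loop, and it lies in the direction in which you invoke Corollary~\ref{Co1}.

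You take $P_1=P_\eta$ and $P_2=P$, which gives $\sigma(P)\subset V_{\delta,r}(P_\eta)$ together with multiplicity preservation on the connected components of $V_{\delta,r}(P_\eta)$. But step (10) is a statement about where the eigenvalues of $P_\eta$ sit, and the containment $\sigma(P)\subset V_{\delta,r}(P_\eta)$ does not constrain $\sigma(P_\eta)$ at all. The pivotal assertion in your proof --- that the a priori separation $\delta<\bar\delta_0$ ``forces $1$ to be the only eigenvalue of $P$ in $U$'' --- is not justified, because $U$ is a component of $V_{\delta,r}(P_\eta)$ and its shape is determined by the (a priori unknown) eigenvalues of $P_\eta$: they could chain together, making $U$ large enough to swallow other eigenvalues of $P$, and nothing in your argument excludes this. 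Trying to close the gap with multiplicity preservation is circular: to count eigenvalues of $P$ in $U$ you need to know $U$ is small, but $U$ being small is what you are trying to prove. You also quietly assume $1$ is a simple eigenvalue of $P$ (``the usual mixing situation''), which is not among the hypotheses --- the paper works only from (A1), and Algorithm~\ref{alg} allows $CL$ to contain several eigenvalues precisely because simplicity is not assumed.

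The paper goes the other way: besides the mesh bound $\text{mesh}(\eta)\le (2\Gamma)^{-1}\varepsilon_0(P_\eta,r,\delta)$ already enforced by the inner loop, it also notes there is an $\eta$ with $\text{mesh}(\eta)<(2\Gamma)^{-1}\varepsilon_0(P,r,\delta)$ (a fixed positive quantity once $P,r,\delta$ are given). This allows one to apply Corollary~\ref{Co1} with $P_1=P$ and $P_2=P_\eta$, yielding $\sigma(P_\eta)\subset V_{\delta,r}(P)\subset V_{\bar\delta_0,r}(P)$. Since $V_{\delta,r}(P)$ has a known, well-separated structure (by Remark~\ref{R} it is $\overline B(0,r)$ plus pairwise-disjoint $\delta$-balls around the eigenvalues of $P$ outside $\overline B(0,r)$), every $\rho_{\eta,i}$ with $|\rho_{\eta,i}|>r$ that is not in $B(1,\delta)$ must lie in a $\delta$-ball around some $\rho_j\neq 1$, and the separation of the $\rho_j$'s then gives $\overline B(\rho_{\eta,i},\delta)\cap\overline B(1,\delta)=\emptyset$ once $\delta$ is small enough. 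This structural control on $V_{\delta,r}(P)$ (not on $V_{\delta,r}(P_\eta)$) is exactly what makes the argument go through, and it is what your proposal is missing.

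Finally, note also that you only establish the inner-loop bound via $\varepsilon_0^*(P_\eta,\delta,r)$; the extra requirement $\text{mesh}(\eta)<(2\Gamma)^{-1}\varepsilon_0(P,r,\delta)$, needed to apply Corollary~\ref{Co1} with $P$ in the reference slot, must be added as a separate existence observation, as the paper does.
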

\begin{proof}
By Lemma \ref{Pro1}, for each $\delta>0$ and $r\in (\alpha,1)$ $\exists$ $\varepsilon=\text{mesh}(\eta)>0$ such that
$$\varepsilon<(2\Gamma)^{-1}\varepsilon_0^*(P_{\eta},r,\delta).$$
Therefore, the internal loop of algorithm \ref{alg} (2)-(7) stops after finitely many steps. To prove that the outer 
loop of stops after finitely many steps, observe that there exist a $K\in\mathbb N$, $K<+\infty$, such that 
$\delta=\frac{1}{K}<\min\{\ell,\overline{\delta}_0\}$, $r=1-\ell$ and $\eta$ with $\varepsilon=\text{mesh}(\eta)>0$ such that

$$\varepsilon<\min\{(2\Gamma)^{-1}\varepsilon_0(P_\eta,r,\delta),(2\Gamma)^{-1}\varepsilon_0(P,r,\delta)\}.$$
This implies $\sigma (P_{\eta})\subset V_{\delta,r}(P)\subset 
V_{\overline{\delta}_0,r}(P)$. Thus, any $P_{\eta}$ eigenvalue 
which is not in $CL$ is contained in $B(0,r)$ or it is at distance of at least $\delta$ from $B(1,\delta)$.
By Remark \ref{R}, (11) of Algorithm \ref{alg} is satisfied for this $K$.
\end{proof}
\subsection{A computer assisted bound on a hole size ensuring the existence of ACCIM}
Given the output $\varepsilon_{\text{com}}$ and $\delta_{\text{com}}$ from Algorithm \ref{alg} it
is now straightforward to prove the existence of an accim for $T_H$.  As a byproduct 
of the computation, the spectral 
information obtained from the algorithm shows 
that the associated escape rate is at most 
 $-\ln(1-\ell)$. 
\begin{theorem}\label{Th2}
Let $T_H$ be a perturbation of $T$ into an interval map with a hole. If $\lambda(H)\le\Gamma\varepsilon_{\text{com}}$ then:
\begin{enumerate}
\item $P_H$ has dominant eigenvalue $e_H>0$ whose associated eigenfunction $f_H^*\ge 0$ is the density of a $T_H$-accim;
\item $1-e_H<\delta_{\text{com}}$;
\item $1-e_H\le (1+\frac{2\alpha_0+B_0}{1-\ell-\alpha})\lambda(H)$.
\end{enumerate}
\end{theorem}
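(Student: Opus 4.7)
The plan is to apply Corollary \ref{Co1} to the pair $(P_1, P_2) = (P_\eta, P_H)$, transferring spectral information from the computable matrix $P_\eta$ to $P_H$, and then to extract (1)--(3) via a Perron--Frobenius argument together with the Lasota--Yorke inequality of Lemma \ref{le2}. First I verify the Keller--Liverani hypothesis: Lemma \ref{le2'}(2) with $\lambda(H) \le \Gamma\varepsilon_{\text{com}}$ gives $|||P_\eta - P_H||| \le 2\Gamma\varepsilon_{\text{com}}$, and step (7) of Algorithm \ref{alg} combined with Lemma \ref{Pro1} yields $2\Gamma\varepsilon_{\text{com}} \le \varepsilon_0(P_\eta, r, \delta_{\text{com}})$ with $r = 1-\ell$. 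Corollary \ref{Co1} thus ensures $\sigma(P_H) \subset V_{\delta_{\text{com}},r}(P_\eta)$ and that the spectral projections of $P_\eta$ and $P_H$ agree in rank on each connected component of $V_{\delta_{\text{com}},r}(P_\eta)$ disjoint from $0$.

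Since $P_\eta$ is row-stochastic, $1 \in \sigma(P_\eta)$. Step (10) of the algorithm separates the connected component $W$ of $V_{\delta_{\text{com}},r}(P_\eta)$ containing $1$ both from $B(0,r)$ and from the $\delta_{\text{com}}$-balls around the other $P_\eta$-eigenvalues outside $B(0,r)$. The matching of spectral projections then places nontrivial spectrum of $P_H$ inside $W$. Combining positivity of $P_H$ (which sends nonnegative functions to nonnegative functions with $\|P_H f\|_1 \le \|f\|_1$) with quasi-compactness from Lemma \ref{le2}, a Perron--Frobenius argument for positive quasi-compact operators (as in Section~7 of \cite{LM}) produces a real dominant eigenvalue $e_H > 0$ of $P_H$ in $W$ with a nonnegative eigenfunction $f_H^*$; normalizing $\|f_H^*\|_1 = 1$ yields the accim density of (1). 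Locating $e_H$ inside $W \cap [0,1]$ then gives (2).

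For (3) I integrate the eigenvalue equation:
\begin{equation*}
e_H = \|P_H f_H^*\|_1 = \int_{X_0} f_H^*\,d\lambda = 1 - \int_H f_H^*\,d\lambda,
\end{equation*}
so $1 - e_H = \int_H f_H^*\,d\lambda \le \|f_H^*\|_\infty\,\lambda(H)$. Applying the one-step Lasota--Yorke bound (\ref{LY:hole}) directly to the eigenfunction gives $e_H V f_H^* \le \alpha V f_H^* + (2\alpha_0 + B_0)$; using $e_H > 1 - \ell$ from (2), this rearranges to $V f_H^* \le (2\alpha_0 + B_0)/(1 - \ell - \alpha)$. Since $\|f_H^*\|_\infty \le V f_H^* + \|f_H^*\|_1 = V f_H^* + 1$, the bound in (3) drops out immediately.

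The delicate point is sharpening (2): Corollary \ref{Co1} only confines $e_H$ to the component $W$, and if several eigenvalues of $P_\eta$ happen to fall in $CL$ their $\delta_{\text{com}}$-balls could merge into a region of $W$ extending up to $2\delta_{\text{com}}$ from $1$. Securing the strict bound $1 - e_H < \delta_{\text{com}}$ requires either invoking that the outer loop of Algorithm \ref{alg} shrinks $\delta$ until $CL = \{1\}$ (possible because $1$ is isolated in $\sigma(P_\eta)$), or tracking $e_H$ directly as the Keller--Liverani image of the eigenvalue $1$ of $P_\eta$ under the perturbation $P_\eta \to P_H$.
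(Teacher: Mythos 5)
Your proof follows the paper's proof very closely for parts (1) and (2): apply Lemma \ref{le2'}(2) and Lemma \ref{Pro1} to verify the hypothesis of Corollary \ref{Co1} with $(P_1,P_2)=(P_\eta,P_H)$, get $\sigma(P_H)\subset V_{\delta_{\text{com}},r}(P_\eta)$ and rank equality on the relevant component, then invoke positivity of $P_H$ to locate the spectral radius $e_H$ as an eigenvalue with nonnegative eigenfunction. One small refinement you invoke but do not need: the paper does not call on a full Krein--Rutman/Perron--Frobenius theorem. It uses only that the spectral radius of a positive operator lies in its spectrum, combined with the fact (from the rank-matching and the Lasota--Yorke inequality) that the spectrum of $P_H$ outside $B(0,r)$ consists of isolated eigenvalues; that already forces $e_H$ to be an eigenvalue and, being $\ge 1-\delta_{\text{com}}$ and $\le 1$, to lie in $B(1,\delta_{\text{com}})$.

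For part (3) you take a visibly more direct route than the paper. The paper writes $1-e_H=\bigl|\int P f_H^*\,d\lambda-\int P_H f_H^*\,d\lambda\bigr|\le|||P-P_H|||\cdot\|f_H^*\|_{BV}$ and then uses the estimate $|||P-P_H|||\le\lambda(H)$; you integrate the eigenvalue equation to get $1-e_H=\int_H f_H^*\,d\lambda\le\|f_H^*\|_\infty\lambda(H)$ and bound $\|f_H^*\|_\infty\le Vf_H^*+\|f_H^*\|_1$. Once one unfolds the paper's $|||P-P_H|||$ estimate (it comes from $P-P_H=P(\cdot\chi_H)$ and $\int_H |f|\le\|f\|_\infty\lambda(H)$), the two arguments are in fact identical; yours simply exposes the cancellation explicitly. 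Both arrive at the same constant. This is a correct, slightly cleaner presentation of the same computation.

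The ``delicate point'' you flag at the end is a genuine observation, and the paper does elide it: Corollary \ref{Co1} matches ranks only on connected components of $V_{\delta,r}(P_\eta)$, and the paper's claim that ``the spectral projections on $B(1,\delta_{\text{com}})$ have the same rank'' is strictly correct only when $CL=\{1\}$, so that $B(1,\delta_{\text{com}})$ \emph{is} the component. If $CL$ were larger, the located eigenvalue of $P_H$ would sit in $B(1,2\delta_{\text{com}})$ and the clean bound in (2) would weaken. Your first suggested repair (driving $\delta$ down until $CL=\{1\}$, which terminates by Remark \ref{R} and the Proposition \ref{A} argument because $1$ is an isolated, simple eigenvalue of $P$ for mixing $T$) is the right one, and could be incorporated as an extra check in Algorithm \ref{alg}. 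Your second suggestion, ``tracking $e_H$ as the Keller--Liverani image of $1$,'' is not directly supported by Theorem \ref{Th1}/Corollary \ref{Co1} as stated; those results control spectrum and projections on components, not a pointwise correspondence of eigenvalues, so the first fix is the one to use.
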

\begin{proof}
Let $\lambda(H)\le\Gamma\varepsilon_{\text{com}}$ and set $\text{mesh}(\eta)=\varepsilon_{\text{com}}$. Then by (2) of Lemma \ref{le2'} we have
$$|||P_{\eta}-P_H|||\le 2\Gamma\varepsilon_{\text{com}}\le\varepsilon_0(P_{\eta},1-\ell,\delta_{\text{com}}).$$
Using Corollary \ref{Co1} with $P_{\eta}=P_1$ and $P_H=P_2$ we obtain that $\sigma(P_H)\subset V_{\delta_{\text{com}},r}(P_{\eta})$. Now, from Algorithm \ref{alg}, recall that $B(1,\delta_{\text{com}})\cap B(0,r)=\emptyset$ and if $|\rho_i|>r$, $\rho_i$ is an eigenvalue of $P_{\eta}$, is not in $CL$, then $B(1,\delta_{\text{com}})\cap B(\rho_i,\delta_{\text{com}})=\emptyset$. Then Corollary \ref{Co1}  implies that the spectral projections of $P_{\eta}$ and $P_H$ on $B(1,\delta_{\text{com}})$ have the same rank. Hence, $P_H$ must have at least one isolated eigenvalue in $B(1,\delta_{\text{com}})$. Let $e_H$ denote the spectral radius of $P_H$. Since $P_H$ is a positive linear operator, $e_H\in\sigma(P_H)$. Moreover, $P_H$ has isolated eigenvalues in $B(1,\delta_{\text{com}})$. Thus, $e_H$ is an eigenvalue of $P_H$ and it must be in $B(1,\delta_{\text{com}})$. This ends the proof of the first two statements of the theorem. 
To prove (3) of the theorem, we first find a uniform\footnote{By uniform we mean here an upper bound which is independent of $H$. Hence it holds for all $T_H$ with $\lambda(H)\le \Gamma\varepsilon_{\text{com}}$.} upper bound on the $BV$-norm of $f^*_H$. By Lemma \ref{le2} we have
$$V(e_Hf^*_H)=VP_Hf^*_H\le\alpha Vf_H^*+(2\alpha_0+B_0)\| f^*_H\| _1.$$
Therefore,
$$
Vf^*_H\le\frac{2\alpha_0+B_0}{e_H-\alpha}\le\frac{2\alpha_0+B_0}{1-\ell-\alpha};
$$
and hence we obtain 
$$\| f^*_H\| _{BV}\le 1+\frac{2\alpha_0+B_0}{1-\ell-\alpha}.$$
Using the fact that $P$, the Perron-Frobenius operator associated with $T$, preserves integrals, we obtain
\begin{equation}
\begin{split}
1-e_{H}&=|\int_0^1Pf_H^*d\lambda-e_H\int_0^1f_H^*d\lambda|\\
&=|\int_0^1Pf_H^*d\lambda-\int_0^1P_Hf_H^*d\lambda|\\
&\le |||P-P_H|||\cdot \| f^*_H\| _{BV}\le (1+\frac{2\alpha_0+B_0}{1-\ell-\alpha})\lambda(H).
\end{split}
\end{equation} 
\end{proof}
\section{examples}\label{comp}
In this section we implement Algorithm \ref{alg} and Theorem \ref{Th2} of the previous section 
on two sample computations.  Our aim is to show the feasibility of
the computation, while at the same time, to discuss some 
analytic techniques that can be used to reduce the 
weight of computations for some of the larger matrices $P_\eta$ that 
may arise during application of Algorithm \ref{alg}. Large matrices should be expected when $\alpha$ is close to 1 or 
alternatively, when the escape rate tolerance $\ell$ is small. 
We will take advantage of the second mechanism; 
 in both examples we use the same map $T$:
 \begin{equation*}
T(x)=\left\{\begin{array}{cc}
\frac{9x}{1-x}&\mbox{for $0\le x\le \frac{1}{10}$}\\
10x-i&\mbox{for $\frac{i}{10}<x\le\frac{i+1}{10}$}
\end{array}
\right. ,
\end{equation*}
where $i=1,2,\dots ,9$. 
However, in the first example $\ell=1/25$ and in the second example $\ell=1/40$; i.e., in second example we will be looking 
for the size of a hole which guarantees the smaller escape rate. 
We remark none of our computations are
particulary time consuming\footnote{In particular, creating an 
Ulam matrix of size $5000\times 5000$, or even much bigger, is not really time demanding. Once a computer code is 
developed for this purpose, which only requires the formula of the map and the number 
of bins of the Ulam partition as an input, it will excute the nonzero entries in few minutes if not less.}
except for the computation of an upper bound on $H^*_{\delta,r}(P_\eta)$\footnote{In our computations we found a rigorous upper 
bound on 
$H^*_{\delta,r}(P_{\eta})$ for an Ulam matrix of size $5000\times 5000$. This computation took few hours using MATLAB on a desktop 
computer.}.  We now turn to the computations.

The Lasota-Yorke inequality for $P$ is given by:
$$VPf\le 1/9Vf+2/9\| f\| _1.$$ Therefore
$P$ satisfies (A1) with  $\alpha_0=1/9$ and $B_0=2/9$ and consequetly, $\Gamma=10/9$, $\alpha=1/3$, $B=5/3$ and  $D=A(A+B+2)=14/3$. 
\begin{example}\label{Ex1}
Given $\ell=1/25$, using Algorithm \ref{alg}, we show that if $\lambda(H)\in(0,\frac{20}{9}\times 10^{-4}]$, $T_H$ has 
an accim with escape rate $-\ln e_H < -\ln(24/25)$.  The values of the variables
\footnote{All these variables depend on $r$ and $\delta$.  $H^{*}_{\delta,r}$ and $n_2$ also depend on $\varepsilon=\text{mesh}(\eta )$ .} involved in the computation are summarized in Table 1.

\bigskip

We present here the method which we have followed to rigorously compute an upper bound on $H^*_{\delta,r}(P_{\eta})$, for $\text{mesh}(\eta)=2\times 10^{-4}$. Using MATLAB we found the dominant eigenvalue $1$ of $P_{\eta}$ is simple and that there are no other peripheral eigenvalues. Moreover, the modulus of any non-peripheral eigenvalues is smaller than $\alpha_0=1/9$.  Therefore we have the following estimate (see \cite{Ka})
\begin{equation}\label{eq3:ex}
\| (z-P_{\eta})^{-1}\| _1\le\delta^{-1}\| \Pi_{1}\| _1 +\| R(z)\| _1,
\end{equation} 
where $\| \Pi_{1}\| $ is the projection associated with the eigenvalue $1$ of the operator $P_{\eta}$, and $R(z)$ is the resolvent of the 
operator $P_{\eta}({\bf 1}-\Pi_{1})$. Since $|z|>r=1-\ell >\alpha_0$, $R(z)$ can be represented by a convergent Neumann series. Indeed, we have 
\begin{equation}\label{ex4:eq}
\begin{split}
\| R(z)\| _1 & = \| \sum_{n=0}^{\infty}\frac{\left(P_{\eta}({\bf 1}-\Pi_{1})\right)^n}{z^{n+1}}\| _1\\
&\le\frac{1}{r}\left(\sum_{n=0}^5\frac{\| \left(P_{\eta}({\bf 1}-\Pi_{1})\right)^n\| _1}{r^n}+
\sum_{n=6}^{\infty}\frac{\| \left(P_{\eta}({\bf 1}-\Pi_{1})\right)^n\| _1}{r^n}\right)\\
&\le \frac{1}{r}\left[\sum_{n=0}^5\frac{\| \left(P_{\eta}({\bf 1}-\Pi_{1})\right)^n\| _1}{r^n}\left(1+\sum_{m=1}^{\infty}\left(\frac{\| \left(P_{\eta}({\bf 1}-\Pi_{1})\right)^{6}\| _1}{r^{6}}\right)^m\right)\right]\\
&= 7.444310493.
\end{split}
\end{equation}
The computation of the estimate in (\ref{ex4:eq}) is the most time consuming step
in the algorithm\footnote{Precisely, the work is in the computation of the powers $\left(P_{\eta}({\bf 1}-\Pi_{1})\right)^n $, $n=1,\dots ,6$. Once these powers are
known the computation of the norm is very fast.  However, we will see in the next example how we can benefit from these numbers and avoid time consuming computations when dealing with a higher order Ulam approximation in the case of a smaller hole.}. Using the definition of $H^*_{\delta,r}$ and inequality (\ref{eq3:ex}), we obtain that 
$$H^*_{\delta,r}\le 45.46070939.$$ 
\end{example}
{\small
\begin{table}[h]\label{table1}
\begin{center}
\begin{tabular}{|c|c|}
  \hline
  $r$ & 24/25\\
  \hline
  $\delta$ & 1/26\\
  \hline
  $\varepsilon$ &$2\times 10^{-4}$\\
  \hline
  $H^{*}_{\delta,r}$ & 45.46070939\\
   \hline
    $n_1$ & 1\\
    \hline
    $C$& 25/24\\
    \hline
    $n_2$& 8\\
    \hline
    $(2\Gamma)^{-1}\varepsilon_0^*$& 0.0002319492040\\ 
   \hline
$\text{Loop I}$&\text{Pass}\\
\hline
\text{Loop II}& \text{Pass}\\
\hline
$\text{Output I}$& $\varepsilon_{\text{com}}=2\times 10^{-4}, \delta_{\text{com}}=1/26$\\
\hline
$\text{Output II}$& $\lambda(H)\in(0,\frac{20}{9}\times 10^{-4}]\implies\, T_H \text{ admits an accim } \mu$\\ 
\text{ }&\text{with escape rate }$-\ln e_H< -\ln (24/25)$\\
\hline
\end{tabular}
\end{center}
 \caption {The output of Algorithm \ref{alg} for $l=1/25$}
 \end{table}
 }
\begin{example}\label{Ex2}
Given $\ell=1/40$, using Algorithm \ref{alg}, we show that if $\lambda(H)\in(0,\frac{10}{9}\times 10^{-5}]$, $T_H$ has 
an accim with escape rate $-\ln e_H < -\ln(39/40)$. The values of the variables involved in the computations are 
summarized in Table 2. 

\bigskip

Here, we explain how we have obtained some of the values which appear in Table 2. In particular, we will explain how we have avoided time demanding 
computation of $H^*_{\delta,r}(P_{\eta'})$ and rigorously estimated 
$$H_{\delta,r}(P_{\eta'})\le 1036.693385,$$
where $\text{mesh}(\eta')\le \text{mesh}(\eta) =2\times 10^{-4}$. In the first pass 
through the Algorithm \ref{alg}, we start with $\varepsilon=\text{mesh}(\eta)=2\times 10^{-4}$. This is the same $\varepsilon$ which closed Algorithm \ref{alg} in Example \ref{Ex1}. So the numbers $\| (P_{\eta}({\bf 1}-\Pi_1))^n\| _1,\, n=1,\dots,6$, can be obtained from the computation of Example \ref{Ex1} as they do not depend on $r$ and $\delta$. Thus, the rigorous estimate
$$H^*_{\delta,r}(P_{\eta})\le 63.73181657$$
easily follows.  However, the inner loop of Algorithm \ref{alg} will fail because
$$2\times 10^{-4}=\varepsilon=\text{mesh}(\eta)>(2\Gamma)^{-1}\varepsilon_0^*=0.0001763820641.$$
Next, Algorithm \ref{alg} asks us to feed another Ulam partition $\eta'$ with $\text{mesh}(\eta')<2\times 10^{-4}$ and 
to repeat the inner loop of Algorithm \ref{alg}. Here, we have used a 3-step trick
 to avoid time demanding estimate of the new value 
$H^*_{\delta, r}(P_{\eta'})$:
\begin{enumerate}
\item Let us suppose for a moment that we are only concerned with a rigorous approximation of the spectrum of $P$, the 
operator associated with $T$. Then $P, P_{\eta}$ and $P_{\eta'}$ satisfy a common Lasota-Yorke inequality which does not 
involve $\alpha$, but rather $\alpha_0$ (see Remark \ref{Re:LY}).
Now, re-checking the computations which were obtained in the first run of Algorithm \ref{alg} and this time with $\alpha\equiv\alpha_0=1/9$ and the modfications of $B$ to $\hat B=1+ \frac{B_0}{1-\alpha_0}$ and $D$ to 
$\hat D=3+\hat B$. Then the value of $(2\Gamma)^{-1}\varepsilon^*_0$ changes to
$$(2\Gamma)^{-1})\varepsilon_0^*=0.0002425063815> 2\times 10^{-4}=\text{mesh}(\eta).$$
Consequently, for any $\eta'$ with $\text{mesh}(\eta')\le\text{mesh}(\eta)$, we have
\begin{equation*}
\begin{split}
|||P_{\eta}-P_{\eta'}|||&\le |||P_\eta-P|||+|||P-P_{\eta'}|||\\
&\le \Gamma\text{mesh}(\eta)+\Gamma\text{mesh}(\eta')\\
&\le 2\Gamma\text{mesh}(\eta)<\varepsilon_0^*(P_{\eta},r,\delta).
\end{split}
\end{equation*}  
Therefore, we can use part one of Theorem \ref{Th1} with $P_1=P_{\eta}$ and $P_2=P_{\eta'}$.
\item In particular, for any $z\in\mathbb C\setminus V_{\delta,r}(P_{\eta'})$ we have
$$\| (z-P_{\eta'})^{-1}\| _{BV}\le \frac{4(1+\hat B)}{1-r}r^{-n_1}+\frac{1}{2\varepsilon_1}\le 1036.693385.$$
Recall that $\varepsilon_1=\frac{r^{n_1+n_2}}{8\hat B(H_{\delta,r}(P_{\eta})+\frac{1}{1-r})}$.
\item Now we go back to the problem of finding the size of a hole which guarantees the existence of a $T_H$-accim with the desired escape rate. Here 
$\alpha=3\alpha_0$ and all we have to do is to feed the estimate on $H_{\delta,r}(P_{\eta})$ obtained in Step 2, 
together with the new $n_2$, in the formula of $\varepsilon_0$ to obtain that
$$(2\Gamma)^{-1}\varepsilon_0\ge 0.00001216687545.$$
Hence, we can deduce that $\text{mesh}(\eta')=10^{-5}$ will do the job; i.e., 
$\lambda(H)\in(0,\frac{10}{9}\times 10^{-5}] 
\implies\, T_H \text{ admits an accim } \mu$ with escape rate $-\ln e_H< -\ln (39/40)$.
\end{enumerate}  
\end{example}
{\small
\begin{table}[h]\label{table2}
\begin{center}
\begin{tabular}{|c|c|c|}
  \hline
  $r$ & 39/40 & 39/40\\
  \hline
  $\delta$ & 1/41 & 1/41\\
  \hline
  $\varepsilon$ &$2\times 10^{-4}$ & $10^{-5}$\\
  \hline
  $\text{Upper bound on }H_{\delta,r}$ & 63.73181657 & 1036.693385\\
   \hline
    $n_1$ & 1 & 1\\
    \hline
    $C$& 40/39 &40/39\\
    \hline
    $n_2$& 8& 11\\
    \hline
    $\text{Lower bound on }(2\Gamma)^{-1}\varepsilon_0$& 0.0001763820641& 0.00001216687545\\ 
   \hline
$\text{Loop I}$& \text{Fail: reduce $\varepsilon$} &\text{Pass}\\
\hline
\text{Loop II}& \text{}& \text{Pass}\\
\hline
$\text{Output I }$&\text{ }& $\varepsilon_{\text{com}}=10^{-5}, \delta_{\text{com}}=1/41$\\
\hline
$\text{Output II}$&\text{ }& $\lambda(H)\in(0,\frac{10}{9}\times 10^{-5}] 
\implies\, T_H \text{ admits an accim } \mu$\\ 
\text{ }&\text{ }&\text{with escape rate }$-\ln e_H< -\ln (39/40)$\\
\hline
\end{tabular}
\end{center}
 \caption {The output of Algorithm \ref{alg} for $l=1/40$}
 \end{table} 
 } 
 \section{The effect of the position of a hole}\label{smooth}
 The results of the previous section give upper bounds on the 
 escape rate that are uniform for a given size of hole, independent
 of the position of the hole.  However, 
  it has been observed already in \cite{BY} that the position of the hole can affect the escape rate; i.e., given a map $T$ and two holes $H_1$, $H_2$, with $\lambda(H_1)=\lambda(H_2)$, it may happen that the escape through $H_1$, say, may be bigger than the escape rate through $H_2$. For example, 
  define the sets
 $$\text{Per}(H_i)=\{p:\, p\in\mathbb N \text{ s.t. for some } x\in H_i,\, T^p(x)=x, \text{ and } T^{p-1}(x)\not= x \};\, i=1,2,$$  
For certain maps, if 
 $$\textnormal{Minimum}\{p\in\text{Per}(H_1)\}\le\textnormal{Minimum}\{p\in\text{Per}(H_2)\}$$
 then the escape rate through $H_1$
 will be smaller than the escape rate through $H_2$ . 
 
 \bigskip
 
 In \cite{KL2} Keller and Liverani obtained  
 precise asymptotic information about 
 the effect of the location 
 of the hole.  Roughly speaking, for a system of holes shrinking to a
 single point, the rate of decay of escape rate depends on two things:
 the value of the invariant density of the map $T$ at the point the holes shrink
 to, and whether or not this point is periodic. 
  We now state a version of this result and will discuss in the next subsection, in a smooth setting, how a combination of Algorithm \ref{alg}, with the proper modification of $P_{\eta}$, can be used with this theorem when the formula of the invariant density of $T$ cannot be found explicitly. When smoothness is not assumed, as in this paper,  obtaining  asymptotics for the escape rate relative to the 
 size of the hole appears to be an 
 open problem.
  \begin{theorem}\label{Th:KL}\cite{KL2}
Let $T$ be piecewise $C^2$ on a finite partition of $[0,1]$ and assume it is piecewise expanding and mixing. Let $\{H_{\kappa}\}$ be a sequence of holes such that $H_{\kappa}\supset H_{\kappa'}$, for $0\le \kappa'<\kappa$, with $H_0=\{y\}$ for some point $y\in[0,1]$ which is a point of continuity of both $T$ and $f^*$, $f^*$ is the invariant density of $T$. Let $T_{H_{\kappa}}$ be a perturbation of $T$ into a map with a hole. Assume that $\inf f^*_{|H_{\kappa}}>0$.  For $\lambda(H_{\kappa})$ sufficiently small\footnote{We can of course quantify what we mean by sufficiently small using Algorithm \ref{alg}.} we have:
 \begin{enumerate}
 \item If $y$ is non-periodic then $\lim_{\kappa\to 0} \frac{1-e_{H_{\kappa}}}{\lambda(H_{\kappa})}=f^*(y)$.
 \item If $y$ is periodic with period $p$ then $\lim_{\kappa\to 0} \frac{1-e_{H_{\kappa}}}{\lambda(H_{\kappa})}=f^*(y)\left(1-\frac{1}{|(T^p)'(y)|}\right).$
 \end{enumerate}
 \end{theorem}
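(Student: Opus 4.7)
The plan is to combine the Keller--Liverani perturbation machinery of Section~2 with a self-consistency analysis of $f^*_H$ on the shrinking hole: iterate the eigenvalue equation exactly $p$ times in the periodic case (or once in the non-periodic case) and track the contribution of a $T^p$-preimage branch near $y$.

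First I would apply Theorem~\ref{Th1} with $P_1 = P$ and $P_2 = P_{H_\kappa}$. Writing $P_{H_\kappa} = P - PM_{H_\kappa}$, where $M_{H_\kappa}$ denotes multiplication by $\chi_{H_\kappa}$, one has $|||P - P_{H_\kappa}||| \le \lambda(H_\kappa) \to 0$, while mixing makes $1$ a simple, spectrally isolated eigenvalue of $P$ on $BV$. For $\kappa$ small the theorem yields a simple dominant eigenvalue $e_H = e_{H_\kappa} \to 1$ of $P_{H_\kappa}$, with non-negative eigenfunction $f^*_H$ converging to $f^*$ in $BV$, hence uniformly on a fixed neighbourhood of $y$ by the continuity hypothesis on $f^*$. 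Integrating the eigenvalue equation against Lebesgue measure, using that $P$ preserves integrals while $P_H g = P(g\chi_{X_0})$, gives, under the normalisation $\int f^*_H\,d\lambda = 1$,
\begin{equation*}
1 - e_H \;=\; \int_{H_\kappa} f^*_H\,d\lambda,
\end{equation*}
so the problem reduces to a leading-order analysis of this integral.

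The central step is to iterate the eigenvalue equation $p$ times (set $p = 1$ in case~(1)) and evaluate pointwise on $H_\kappa$:
\begin{equation*}
e_H^p\, f^*_H(x) \;=\; P_H^p f^*_H(x) \;=\; \sum_{z \in T^{-p}(x)\,\cap\, X_{p-1}} \frac{f^*_H(z)}{|(T^p)'(z)|},
\end{equation*}
which we compare to the closed-system identity $f^*(x) = P^p f^*(x) = \sum_{z \in T^{-p}(x)} f^*(z)/|(T^p)'(z)|$. For $x \in H_\kappa$ close to $y$, the $T^p$-preimages of $x$ split into those bounded away from $y$ and, possibly, one branch $\tilde z$ near $y$. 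In case~(1) there is no near-$y$ branch, all preimages are uniformly separated from $y$, and uniform convergence $f^*_H \to f^*$ there gives $e_H f^*_H(x) = f^*(x) + o(1)$ uniformly on $H_\kappa$. In case~(2) the near-$y$ branch is $\tilde z = y + (x-y)/\Lambda + O((x-y)^2)$ with $\Lambda = |(T^p)'(y)|$; since $\tilde z \in H_\kappa$ it fails the survival condition $\tilde z \in X_0$ and is \emph{missing} from $P_H^p f^*_H$, while it contributes $f^*(y)/\Lambda + o(1)$ to $P^p f^*(x)$. All other preimages contribute equally to both sums up to $o(1)$, yielding
\begin{equation*}
e_H^p\, f^*_H(x) \;=\; f^*(x) - \frac{f^*(y)}{\Lambda} + o(1) \;=\; f^*(y)\Bigl(1 - \frac{1}{\Lambda}\Bigr) + o(1)
\end{equation*}
uniformly for $x \in H_\kappa$. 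Dividing by $e_H^p \to 1$, integrating over $H_\kappa$, and combining with the identity of the previous paragraph yields~(2); the analogous computation with $p = 1$ yields~(1).

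The main obstacle is making the ``$o(1)$'' uniform on $H_\kappa$, so that after multiplication by $\lambda(H_\kappa)$ it remains $o(\lambda(H_\kappa))$. This requires: that the finitely many non-periodic $T^p$-branches over $H_\kappa$ satisfy the survival condition $X_{p-1}$ uniformly for $\kappa$ small (their forward orbit segments up to time $p-1$ are bounded away from $y$); that $f^*_H \to f^*$ uniformly on the compact set where these branches live, which follows from $BV$-convergence and continuity of $f^*$ at the finitely many accumulation points; that the local $C^2$ expansion of $T^p$ at the periodic point produces the stated form for $\tilde z$ with a Jacobian error of order $\kappa$; and that degenerate configurations (a discontinuity of $T$ lying on the orbit through $y$, or $y$ being periodic of some strictly smaller $p' < p$) are excluded by the piecewise-$C^2$ mixing hypothesis together with $\inf f^*|_{H_\kappa} > 0$.
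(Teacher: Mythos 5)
The paper does not supply a proof of Theorem~\ref{Th:KL}; it is stated as a quotation from~\cite{KL2}, where Keller and Liverani derive it from a general abstract perturbation result expressing the leading-eigenvalue defect $1-e_{H_\kappa}$ solely in terms of the \emph{unperturbed} eigenfunction $f^*$: one shows $1-e_{H_\kappa}=\Delta_\kappa\bigl(1-\sum_{k}q_{k,\kappa}\bigr)\bigl(1+o(1)\bigr)$ with $\Delta_\kappa=\int_{H_\kappa}f^*\,d\lambda$ and $q_{k,\kappa}=\Delta_\kappa^{-1}\int_{H_\kappa}P_{H_\kappa}^kP(f^*\chi_{H_\kappa})\,d\lambda$, and then evaluates the limits $\Delta_\kappa/\lambda(H_\kappa)\to f^*(y)$ and $\sum_k q_{k,\kappa}\to 1/|(T^p)'(y)|$ (resp.\ $0$) from the local dynamics near $y$. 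That route deliberately avoids any pointwise control of $f^*_{H_\kappa}$.

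Your proposal takes a genuinely different path, and the step on which it pivots is the gap. You assert that $f^*_{H_\kappa}\to f^*$ in $BV$ and therefore uniformly. The Keller--Liverani machinery does not give this: Theorem~\ref{Th1} and Corollary~\ref{Co1} control the spectral projections only in the mixed norm $|||\cdot|||$, which (together with a uniform $BV$ bound on $f^*_{H_\kappa}$) yields $\|f^*_{H_\kappa}-f^*\|_1\to 0$ but \emph{not} $\|f^*_{H_\kappa}-f^*\|_{BV}\to 0$; indeed $f^*_{H_\kappa}$ typically carries $O(1)$ jumps along $\bigcup_j T^j(\partial H_\kappa)$ that persist as $\kappa\to 0$, so uniform convergence to $f^*$ fails globally. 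What your iteration argument actually needs is the weaker but still nontrivial statement that $f^*_{H_\kappa}\to f^*$ uniformly on the finitely many $T^p$-preimage branches of $H_\kappa$ other than the near-$y$ branch. This is plausible --- those branches are bounded away from the forward orbit of $y$, where the discrepancy concentrates --- and could likely be established by a bootstrap: iterate $e_{H_\kappa}(f^*_{H_\kappa}-f^*)=P(f^*_{H_\kappa}-f^*)-P(f^*_{H_\kappa}\chi_{H_\kappa})+(1-e_{H_\kappa})f^*$ $n$ times, observe that $P^j\bigl[P(\,\cdot\,\chi_{H_\kappa})\bigr]$ is supported near $T^{j+1}(y)$, and use that $\|P^n(f^*_{H_\kappa}-f^*)\|_{BV}\le A\alpha^n\|f^*_{H_\kappa}-f^*\|_{BV}+B\|f^*_{H_\kappa}-f^*\|_1$ is small for $n$ large and then $\kappa$ small. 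But as written, the sentence ``uniform convergence $f^*_H\to f^*$ there gives\dots'' is invoked without justification, and this is precisely the delicate point that the published proof in \cite{KL2} is engineered to circumvent.
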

 \subsection{ $C^3$ circle maps}
 Theorem \ref{Th:KL} requires the knowledge of the value of the invariant density $f^*$; in particular, its value at the point $y$. Unfortunately, the approximate invariant density which is obtained by Ulam's method in Algorithm \ref{alg} does not provide a pointwise approximation of $f^*$. However, in a smooth setting, one can modify Ulam's scheme, and the function spaces where $P$ and $P_{\eta}$ act, to obtain rigorous approximation of $\| f^*-f_{\eta}\| _{\infty}$, where $f_{\eta}$ denotes the fixed function for the modified finite rank operator $P_{\eta}$. Of course our main theoretical tool, 
Theorem \ref{Th1} will need to be modified. 
In \cite{KL} an abstract version of Theorem \ref{Th1} is proved.  The result 
requires bounded linear operators $P_1$ and $P_2$ acting on two abstract Banach spaces whose norms $\| \cdot\| $ and $|\cdot|$ satisfy $|\cdot|\le \| \cdot\| $, and the unit ball of $\| \cdot\| $ is $|\cdot|$-compact.  Thus, Theorem \ref{Th1} is a particular application of the general result of \cite{KL}.
When $T$ is a $C^3$ circle map, it is well known that its invariant density $f^*$ is $C^2$. Thus, instead of $L^1$ and $BV$, one can study the action of $P$ and a smooth version\footnote{For instance one can use a piecewise linear approximation method \cite{DL}.} of $P_{\eta}$ on the spaces $W^{1,1}$ and $W^{1,2}$ with norms
 $$\| f\| _{W^{1,1}}=\| f\| _1+\| f'\| _1$$
 $$\| f\| _{W^{1,2}}=\| f\| _1+\| f'\| _1+\| f''\| _1$$
 respectively. A common Lasota-Yorke inequality of $P$ and $P_{\eta}$ in this setting is given by: for $f\in W^{1,2}$ and $n\in\mathbb N$ we have
 $$\| Pf\| _{W^{1,2}}=\alpha_0^{2n}\| f\| _{W^{1,2}}+{\bar B}\| f\| _{W^{1,1}},$$ 
  $$\| P_{\eta}f\| _{W^{1,2}}=\alpha_0^{2n}\| f\| _{W^{1,2}}+{\bar B}\| f\| _{W^{1,1}},$$ 
where $\bar B\ge 0$ which depends on $T$ only. Using this setting, one obtains the estimate
$$\| f^*-f_{\eta}\| _{W^{1,1}}\le\bar C\cdot \text{mesh}(\eta).$$  
For more details and for a proof of the above Lasota-Yorke inequality we refer to Section 10.2 of \cite{Li}. 
 
 \bigskip
 
 In a setting like this, one can then repeat Algorithm \ref{alg} with the smooth version of $P_{\eta}$ and obtain the following reformulation of Theorem \ref{Th:KL} to a setting where the invariant density $f^*$ is a priori unknown. Note that for $f\in W^{1,2}$, $\| f\| _{\infty}\le\| f\| _{W^{1,1}}$.
 \begin{theorem}\label{Th3}
Let $T$ be a $C^3$ circle map. Let $\{H_{\kappa}\}$ be a sequence of holes such that $H_{\kappa}\supset H_{\kappa'}$, for $0\le \kappa'<\kappa$, with $H_0=\{y\}$ for some point $y\in[0,1]$. Let $f^*$ be the invariant density of $T$, and $T_{H_{\kappa}}$ be a perturbation of $T$ into a map with a hole\footnote{$\inf_{x\in[0,1]}f^*>0$ for $C^2$ circle maps. See \cite{KZ} or \cite{Mu}. Thus, the assumption $\inf f^*_{|H_{\kappa}}>0$ is automatically satisfied for such maps.}. Let $\varepsilon=\text{mesh}(\eta)$. $\exists$ a constant $\bar C=\bar C(P_{\eta})$ such that for $\lambda(H_{\kappa})\in (0,\Gamma\varepsilon_{\text{com}}]$, we have:
 \begin{enumerate}
 \item If $y$ is non-periodic then 
 $$f_{\eta}(y)- \bar C\cdot \varepsilon \le \lim_{\kappa\to 0} \frac{1-e_{H_{\kappa}}}{\lambda(H_{\kappa})}\le
  f_{\eta}(y)+ \bar C\cdot \varepsilon.$$
 \item If $y$ is periodic with period $p$ then 
 \begin{equation*}
\begin{split}
\left(f_{\eta}(y)- \bar C\cdot\varepsilon \right)\left(1-\frac{1}{|(T^p)'(y)|}\right)&\le\lim_{\kappa\to 0} \frac{1-e_{H_{\kappa}}}{\lambda(H_{\kappa})}\le\\
&\left(f_{\eta}(y)+ \bar C\cdot\varepsilon \right)\left(1-\frac{1}{|(T^p)'(y)|}\right).
 \end{split}
 \end{equation*}
 \end{enumerate}
 \end{theorem}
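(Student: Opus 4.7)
The plan is to reduce Theorem \ref{Th3} to the combination of two ingredients already assembled in this section: the Keller--Liverani asymptotic formula (Theorem \ref{Th:KL}), which expresses the limiting ratio in terms of the true invariant density $f^*(y)$, and the $W^{1,1}$-approximation estimate $\|f^*-f_\eta\|_{W^{1,1}}\le\bar C\cdot\text{mesh}(\eta)$ produced by running the smooth version of Algorithm \ref{alg}. Once both are in hand, Theorem \ref{Th3} follows by substituting $f_\eta(y)\pm\bar C\varepsilon$ for $f^*(y)$ in Theorem \ref{Th:KL}.

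First I would execute the smooth version of Algorithm \ref{alg}, working on the pair of Banach spaces $(W^{1,2},W^{1,1})$ in place of $(BV,L^1)$, and using the common Lasota--Yorke inequality stated above together with the abstract perturbation result of \cite{KL} (of which Theorem \ref{Th1} is a concrete instance). This produces the computable parameters $\varepsilon_{\text{com}}$ and $\delta_{\text{com}}$ adapted to the smooth setting. For any $H_\kappa$ with $\lambda(H_\kappa)\le \Gamma\varepsilon_{\text{com}}$, the smooth analog of Theorem \ref{Th2} guarantees that $T_{H_\kappa}$ admits an accim with a well-defined escape rate $-\ln e_{H_\kappa}$.

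Next I would verify the hypotheses of Theorem \ref{Th:KL}. Since $T$ is a $C^3$ circle map, both $T$ and $f^*$ are continuous at every $y\in[0,1]$, and by the footnote in the statement $\inf f^*>0$. Hence Theorem \ref{Th:KL} applies and yields
\[
\lim_{\kappa\to 0}\frac{1-e_{H_\kappa}}{\lambda(H_\kappa)}=f^*(y)
\]
when $y$ is non-periodic, and the analogous product with the factor $\bigl(1-|(T^p)'(y)|^{-1}\bigr)$ when $y$ is periodic of period $p$. The final step is to replace the unknown $f^*(y)$ by the computable $f_\eta(y)$ using the embedding $\|\cdot\|_\infty\le\|\cdot\|_{W^{1,1}}$ quoted just before the theorem, which gives $|f^*(y)-f_\eta(y)|\le\bar C\cdot\varepsilon$ pointwise. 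Plugging these two-sided bounds into the Keller--Liverani identity (and using that $1-|(T^p)'(y)|^{-1}\ge 0$ in the periodic case, since $T$ is expanding, so the inequalities are preserved under multiplication by this factor) produces precisely the claimed inequalities.

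The main obstacle I anticipate is not the logical chain itself, which is essentially a substitution, but rather making the constant $\bar C$ verifiably explicit in terms of the data that Algorithm \ref{alg} already produces. The standard way to obtain $\|f^*-f_\eta\|_{W^{1,1}}\le\bar C\cdot\text{mesh}(\eta)$ is to write $f^*-f_\eta$ via a resolvent identity involving $(P-P_\eta)$ and the spectral projection complement of the fixed eigenvalue $1$, and then to control the relevant resolvent in the strong norm using precisely the sort of quantitative bound that Algorithm \ref{alg} delivers (for instance via the smooth analog of $H^*_{\delta,r}$). Arranging this so that $\bar C$ emerges as a genuine numerical output of the smooth algorithm, rather than merely being asserted to exist, is the key technical point; once this is done, the remainder of the argument is a direct substitution into Theorem \ref{Th:KL}.
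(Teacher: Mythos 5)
Your argument matches the paper's sketch exactly: run the smooth ($W^{1,2}$/$W^{1,1}$) version of Algorithm~\ref{alg} to get $\|f_\eta-f^*\|_\infty\le\bar C\varepsilon$, then substitute the resulting pointwise two-sided bound on $f^*(y)$ into the Keller--Liverani formula of Theorem~\ref{Th:KL}. The paper's proof is no more than this sketch, so your proposal is correct and takes essentially the same route (your closing remarks on making $\bar C$ numerically explicit go beyond what the paper actually carries out).
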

 \begin{proof}
 We only give a sketch of the proof. Suppose that we have used Algorithm \ref{alg} with the proper modification of $P_{\eta}$ and the function spaces. Then the invariant density, which is a byproduct of the algorithm, would provide the following estimate:
 $$\| f_{\eta}-f^*\| _{\infty}\le \bar C\cdot\varepsilon.$$
 Consequently, for any $y\in [0,1]$, we have 
 \begin{equation}\label{eq:app}
 | f_{\eta}(y)-f^*(y)|\le  \bar C\cdot \varepsilon. 
 \end{equation} 
 \end{proof}
 Thus, the proof follows by using (\ref{eq:app}) and Theorem \ref{Th:KL}.
 \begin{remark}
 All the constants which are hiding in the computation of $\bar C=\bar C(P_{\eta})$ can be rigorously computed using Theorem \ref{Th1} with the spaces $W^{1,1}$ and $W^{1,2}$. It should be pointed out that these constants cannot be computed if one attempts to do this a approximation in the $L^1$, $BV$ framework. This is because the estimates will depend on $(f^*)''$ which is a priori unknown. 
 \end{remark}
 \begin{remark}
 For $C^2$ Lasota-Yorke maps, a result similar to Theorem \ref{Th3} is not obvious at all. The problem for $C^2$ Lasota-Yorke maps involves two issues: 
 \begin{enumerate}
 \item The invariant density $f^*$ is $C^1$. This means that the framework of $W^{1,1}$, $W^{1,2}$ cannot be used.
 \item If one uses the function spaces $BV$ and $L^1$, then to the best of our knowledge, only the original Ulam method will fit in this setting. The problem with Ulam's method is that it provides only good estimates in the $L^1$ norm $\| f^*-f_{\eta}\| _1= \bar C\cdot \varepsilon\ln1/\varepsilon$. However, typically, $\| f^*-f_{\eta}\| _{BV}\not\to 0$.
 \end{enumerate}
 Our last comment on this is that providing a scheme for $C^2$ Lasota-Yorke maps to obtain a result similar to that of Theorem \ref{Th3} would be an interesting problem. 
 \end{remark}
\bibliographystyle{amsplain}

\end{document}